\documentclass{amsart}

\usepackage{amsmath,amssymb,amsfonts,amsthm}
\usepackage{mathtools}
\usepackage{mathrsfs}

\usepackage{pgf,tikz}
\usepackage{mathrsfs}
\usepackage{pgf,xcolor}
\usetikzlibrary{arrows}
\usepackage[all]{xy}
\usepackage{todonotes}

 \DeclareMathOperator{\tr}{tr}

\DeclareMathOperator{\Span}{span}
\DeclareMathOperator{\Hom}{Hom}
\DeclareMathOperator{\U}{U}

\newcommand{\CP}{{\mathbb C}P}
\newcommand{\C}{{\mathbb C}}
\newtheorem{theorem}{Theorem}[section]
\newtheorem{corollary}[theorem]{Corollary}
\newtheorem{lemma}[theorem]{Lemma}

\newtheorem{definition}[theorem]{Definition}
\newtheorem{proposition/definition}[theorem]{Proposition/Definition}

\theoremstyle{definition}

\newtheorem{remark}[theorem]{Remark}

\usepackage{pdfpages}
\usepackage{hyperref}

\title[Constantly curved minimal two-spheres in unitary groups]
{Constantly curved minimal immersions of the two-sphere in unitary groups}
\author[R. Pacheco]{Rui Pacheco}
\address{Centro de Matem\'{a}tica e Aplica\c{c}{\~{o}}es (CMA-UBI), Universidade da Beira Interior, 6201 -- 001
	Covilh{\~{a}}, Portugal.}
\email{rpacheco@ubi.pt, mehmood.ur.rehman@ubi.pt}

\author[M. U. Rehman]{Mehmood Ur Rehman}

\thanks{The first author was partially supported by Funda\c{c}\~{a}o para a Ci\^{e}ncia e Tecnologia through the project UIDB/00212/2025. The second author was partially supported by Funda\c{c}\~{a}o para a Ci\^{e}ncia e Tecnologia through the grant UI/BD/153058/2022.}

\keywords{harmonic map, minimal immersion, Grassmannian manifold, unitary group, Riemann surface, constant curvature, Veronese map, loop groups}
\subjclass[2020]{53C42,  53A10, 53C35, 58E20}

\begin{document}

	\maketitle

\begin{abstract}In this article,
we investigate rigidity results for  constantly curved minimal immersions of the two-sphere $S^2$  into the unitary group $\U(n)$. Using loop group methods for harmonic maps, we establish a correspondence between such immersions and a distinguished class of constantly curved holomorphic immersions of $S^2$ into finite-dimensional Grassmannians. 
In the case $\U(3)$, we classify the constantly curved minimal immersions of $S^2$ with uniton number one and prove that, under a natural unramifiedness condition, those of uniton number two are $S^1$-invariant; as a consequence, every constantly curved totally unramified minimal immersion $S^2\to \U(3)$ of uniton number two is unitarily congruent to the composition of the first Gauss map of the Veronese curve in $\mathbb{CP}^2$ with the Cartan embedding $\mathbb{CP}^2\hookrightarrow \U(3)$.

\end{abstract}

\maketitle

\section{Introduction}
We investigate rigidity results for  minimal immersions with constant curvature of the two-sphere $S^2$ into the unitary group $\U(n)$. This falls within an important topic in differential geometry, going back to the work of O. Borůvka, where, for each $m>0$, a   linearly full minimal immersion of $S^2$ into $S^{2m}$  with positive constant curvature, now known as the Veronese-Borůvka sphere, was constructed (see \cite{kenmotsu1997} and references therein). 
Later, E. Calabi \cite{calabi} proved that, up to rigid motions, these are the only compact minimal surfaces with constant positive curvature in $S^n$, for any $n$.  The  classification of minimal immersions of surfaces with constant curvature in $S^n$ was completed by N. Wallach \cite{wallach}, K. Kenmotsu \cite{kenmotsu1976} and R. Bryant \cite{bryant1985}.    

Several researchers have investigated the analogous problem for minimal surfaces in other Riemannian symmetric spaces. E. Calabi \cite{Calabi1958} (see also \cite{Lawson}) established a foundational result by showing that any linearly full holomorphic curve with constant curvature
of $S^2$ into the complex projective space $\C P^n$ is the Veronese curve $V_0^n:S^2 \rightarrow \mathbb{C}P^n$, up to
unitary congruence. J. Bolton et al. \cite{Bolton} proved that any linearly full  minimal immersion with constant curvature of  $S^2$ into $\CP^n$ belongs to the  \emph{Veronese sequence} 	$V_0^n,\ldots, V^n_n :S^2 \rightarrow \mathbb{C}P^n$.  The local version of this result was found by Q.-S. Chi and Y. Zheng \cite{Chi-Zheng}. Z. Li and Z.-H. Yu \cite{Li99} provided a complete classification  of  minimal immersions with constant curvature of $S^2$  into $G_2(\C^4)$.  For other Grassmannians, however, a complete classification of such immersions appears to be very difficult to obtain, and only partial results are currently known.  In \cite{HeJiao2014} and \cite{HeJiao2015}, respectively, the authors 
classified linearly full \emph{totally unramified} conformal minimal
immersions with constant curvature of $S^2$ into  the quaternionic projective space $\mathbb{H}P^2$ and $G_2(\C^5)$, respectively.

Our approach to the $\U(n)$ case relies on the loop group methods for harmonic maps developed by K. Uhlenbeck \cite{uhlenbeck} and G. Segal \cite{segal}. K. Uhlenbeck was the first to observe a correspondence between harmonic maps from a simply connected Riemann surface into $\U(n)$ and \emph{extended solutions} into the loop group $\Omega \U(n)$, from which the original maps can be recovered \cite[Theorem 2.2]{uhlenbeck}. 
When the domain is $S^2$, every nonconstant harmonic map
$\psi:S^2\to \U(n)$ is a branched minimal immersion and has finite
uniton number, i.e., it admits a \emph{polynomial} extended solution  \cite[Theorem 11.5]{uhlenbeck}.  G. Segal \cite{segal} showed that the Grassmannian model of loop groups provides a natural framework for these results. Within the Grassmannian model, the polynomial extended solution corresponds to a holomorphic map $X$ from $S^2$ into a certain finite-dimensional Grassmannian. We will say that the original map $\psi$ is \emph{totally unramified}  if the corresponding holomorphic $X$ is totally unramified (in the sense of \cite{HeJiao2014,HeJiao2015}). 
In Section \ref{sec:finitegrass}, Lemma \ref{metricslemma}, we prove that $\psi$ is a minimal immersion of $S^2$ with constant curvature  if and only if $X$ is a minimal immersion with constant curvature of $S^2$.  
This reduces our classification problem to that of classifying a distinguished class of holomorphic immersions with constant curvature into finite-dimensional Grassmannians. To address this reduced problem, we make essential use of the theory of harmonic sequences and harmonic diagrams developed in \cite{burstall-wood} (see also \cite{PW1}).

The first nontrivial case is $n=3$. 
By \cite{segal,uhlenbeck}, every harmonic map $\psi:S^2\to\U(3)$ has \emph{uniton number} $r\leq 2$; that is, it  admits a polynomial extended solution of degree $r\leq 2$. 
The case $r=1$ is readily handled (Theorem \ref{uniton1}), whereas the case $r=2$ is considerably more challenging. In Theorem \ref{thm:unramified2}, we prove that, if $\psi:S^2\to\U(3)$ is a minimal immersion of uniton number $r=2$ with constant curvature satisfying some natural unramifiedness condition (in particular, this includes the case where $\psi$ is totally unramified), then $\psi$ is \emph{$S^1$-invariant} (in the sense of \cite[\S 3]{burstallguest}). Combining this with the results of Section \ref{sec:s1invariant} on $S^1$-invariant minimal immersions with constant curvature, we conclude that every such  minimal immersion is unitarily congruent to the composition  of
the first Gauss map $V_1^2$ of the Veronese curve $V_0^2$
 with the Cartan embedding $\C P^2\hookrightarrow \U(3)$.

\section{Preliminaries}

Let $\U(n)$ denote the unitary group of degree $n$, with identity element $I_n$,  equipped with the bi-invariant metric
$$h_n(\xi,\eta)=\frac18\tr (\xi \eta^*),\quad \xi,\eta\in \mathfrak{u}(n),$$
where $\mathfrak{u}(n)$ denotes the Lie algebra of $\U(n)$ and $*$ denotes the Hermitian adjoint. The scaling factor $\frac18$ agrees  with the convention in \cite{JiaoPeng2003}.  
Let $M$ be a Riemann surface and $\psi:M \to \U(n)$ be a smooth map. Consider the $\mathfrak{u}(n)$-valued $1$-form  
$\tfrac{1}{2}\psi^{-1}d\psi=A^{\psi}_{z}dz+A^{\psi}_{\bar z}d \bar z$, with $(A^{\psi}_{\bar z})^*=-(A^{\psi}_{z} )$,  
where $z$ is a  local complex coordinate on $M$. If $\psi$ is conformal, then the pullback of $h_n$ by $\psi$ is  locally given by
\begin{equation}\label{pullbackpsi}
\psi^*h_n=\tr  A^\psi_z(A^\psi_z)^*dzd\bar z.
\end{equation}

Recall from \cite[\S 1]{uhlenbeck}  that $\psi$ is \emph{harmonic}  if and only if 
$(A^{\psi}_{z})_{\bar{z}}+(A^{\psi}_{\bar z})_{{z}}=0.$ When $M=S^2$, every harmonic map $\psi:S^2\to \U(n)$ is weakly conformal, since its Hopf differential is holomorphic and every holomorphic quadratic differential on $S^2$ vanishes identically. Thus, every nonconstant harmonic map $S^2\to \U(n)$ is a branched minimal immersion. 

Assume that $M$ is simply connected. By \cite[Theorem 2.2]{uhlenbeck}, every harmonic map  $\psi:M \to \U(n)$ admits an \emph{extended solution} $$\Psi:M\to \Omega \U(n)=\{\gamma:S^1\to\U(n) \,\, \mathrm{smooth}\,|\, \gamma(1)=I_n  \},$$ 
which is unique up to left multiplication by a constant loop  and satisfies
\begin{equation}\label{PSI}
\Psi^{-1}d\Psi=(1-\lambda^{-1})A^\psi_zdz+(1-\lambda)A^\psi_{\bar z}d\bar z,\quad \lambda\in S^1.
\end{equation}
We set $\Psi_\lambda (z)=\Psi(z)(\lambda)$. The harmonic map $\psi$ is recovered from its extended solution $\Psi$  by 
the formula $\psi=\Psi_{-1}$, up to left multiplication by a constant element of $\U(n)$.

Next we consider 
the  \emph{Grassmannian model} \cite{pressley,segal} (see also \cite{APW1})  for based loop groups. 
In this model,  a loop $\gamma\in \Omega \U(n)$ corresponds to the closed subspace $W=\gamma \mathcal{H}_+$ of $L^2(S^1,\C^n)$, the Hilbert space  of square-integrable $\C^n$-valued functions on $S^1$, where  $\mathcal{H}_+$  denotes the closed subspace of $L^2(S^1,\C^n)$  consisting of Fourier series whose negative Fourier coefficients vanish. 
The subspace $W$ is \emph{shift-invariant}, meaning it is closed under multiplication by $\lambda \in S^1$: 
\begin{equation}\label{shiftinvariance}
	\lambda W \subset W.
\end{equation}
Accordingly, an extended solution $\Psi$ corresponds to a smooth vector subbundle  $W$ of $\underline{\mathcal{H}}:=M\times L^2(S^1,\C^n)$, whose fiber at $z$ is the shift-invariant subspace given by
$W(z)=\Psi(z) \mathcal{H}_+$. 
The harmonicity condition $(A^{\psi}_{z})_{\bar{z}}+(A^{\psi}_{\bar z})_{{z}}=0$ leads to the following properties of $W$:
\begin{align}\label{lambda1}
	 \partial_{ \bar z}\Gamma(W)&\subset \Gamma(W); \\ \lambda\partial_z\Gamma(W) &\subset \Gamma(W); \label{lambda2} \end{align}
where $\Gamma(W)$ is the space of all smooth sections of $W$, and $\partial_z,\partial_{ \bar z}$ are the derivatives with respect  to $z,\bar z$. The property \eqref{lambda1} means  that $W$ is a holomorphic vector subbundle of $\underline{\mathcal{H}}$.

When the domain is the two-sphere $S^2$, K. Uhlenbeck \cite[Theorem 11.5]{uhlenbeck} observed that any harmonic map $\psi : S^2 \to \U(n)$  has \emph{finite uniton number}, that is, it admits a \emph{polynomial} extended solution, that is, an extended solution of the form
$\Psi= \sum_{i=0}^{r}\lambda^{i} A_i$, with $A_i: S^2 \to \mathfrak{u(n)} \otimes \C$ smooth,  for some nonnegative integer $r$.  The corresponding $W=\Psi \mathcal{H}_+$ satisfies the following inclusions 
\begin{equation}\label{unitonr}
\lambda^r \mathcal{H}_+ \subset W \subset \mathcal{H}_+.
\end{equation}
The smallest such $r$ is called the uniton number of $\psi$. From \cite[Proposition 2.12]{segal}, we have $r<n$; moreover, in view of \cite[Corollary 2.10]{segal}, we can assume, without loss of generality, that 
 \begin{equation}\label{span}
	\mbox{	$\bigcup_{z \in S^2}W(z)$ spans $\mathcal{H}_+$}.  
\end{equation}  
If $W=\Psi \mathcal{H}_+$ satisfies \eqref{span}, we say that $\Psi$ (or $W$) is a \emph{normalized} extended solution. 

\subsection{Harmonic maps into complex Grassmannians}
Let $G_k(\C^{n})$ denote the Grassmannian of all $k$-dimensional complex subspaces of $\C^{n}$. We equip $G_k(\C^{n})$  with
the invariant metric $h_{n}$
 induced from the bi-invariant metric on $\U(n)$ by the Cartan embedding  $\iota:G_k(\C^{n}) \hookrightarrow \U(n)$, defined by 
$\iota(X)=\pi_X-\pi_X^\perp$, where  $\pi_X$ and $\pi_X^\perp$ denote the  orthogonal projections, with respect to the standard Hermitian inner product on $\C^{n}$, onto $X$ and $X^\perp$, respectively. Given a smooth map $X:M\to G_k(\C^{n})$ (equivalently, a smooth rank-$k$ complex vector subbundle of $M\times \C^n$), we denote  
 its  \emph{$\partial_z$-second fundamental form} \cite[\S  1]{burstall-wood}  by $A'_X\in\Hom(X,X^\perp)$, which is locally defined by
$A'_X(s)=\pi_X^\perp \partial_zs$, with $s\in\Gamma(X)$.  In what follows, we will use the  well-established theory  \cite{burstall-wood,chern-wolfson,PW1} of \emph{harmonic sequences} and \emph{diagrams} associated with harmonic maps from a Riemann surface into complex Grassmannians. Given any such harmonic map $X$, we denote its \emph{$i$th $\partial_z$-Gauss bundle}  by $G^{(i)}(X)$, for $i\in \{1,2,\ldots\}$.

If $X:S^2\to  G_k(\C^{n})$ is holomorphic, then we can take the smallest $r$ such that  $G^{(r+1)}(X)=\{0\}$, and its \emph{fundamental diagram} is given by 
\begin{equation}\label{fundamentaldiagramgeneral}
	\begin{gathered}
		{\xymatrixrowsep{1.5pc}{\xymatrix{
					X\ar[r]^{\hspace{-0.2in} A'_X}  & G^{(1)}(X)\ar[r]^{A'_{G^{(1)}(X)}}   & G^{(2)}(X) \ar[r]^{A'_{G^{(2)}(X)}} &\ldots  \ar[r]^{\!\!\!\!\!\!\!\!\!\!\!\!\!A'_{G^{(r-1)}(X)}} & G^{(r)}(X) \ }}}
	\end{gathered}.
	\underline{}\end{equation}   
Recall from \cite[Definition 2.2]{HeJiao2014} that such holomorphic map $X$ is said to be \emph{totally unramified}  if each horizontal arrow in \eqref{fundamentaldiagramgeneral} is  \emph{unramified}, that is, 
\begin{equation}\label{tudef}
\det A'_{G^{(i)}(X)} (A'_{G^{(i)}(X)})^*\neq 0\,\, \mbox{everywhere on $S^2$, for all $i\in\{0,\ldots,r-1\}$,}\end{equation} 
where $G^{(0)}(X)=X$. 
\section{Reduction to constantly curved holomorphic immersions in $G_k(\C^{rn})$}\label{sec:finitegrass}

 Given a  harmonic map $\psi: S^2\to \U(n)$ of uniton number $r>0$, we can construct a holomorphic map from $S^2$ into a finite-dimensional complex Grassmannian as follows:

Let  $W=\Psi\mathcal{H}_+$ be a normalized extended solution of $\psi$, hence satisfying conditions \eqref{shiftinvariance}--\eqref{span}.  
Set
\begin{equation}\label{Vdef}
X=W\cap \bigoplus_{j=0}^{r-1}\lambda^j\C^n, \quad \C^{rn}\cong \bigoplus_{j=0}^{r-1}\lambda^j\C^n.
\end{equation} Due to the holomorphicity of $W$,  $X$ has constant rank $k< rn$ away from a finite subset of $S^2$. We can fill out the zeros to obtain a globally defined holomorphic   map $X:S^2\to G_k(\C^{rn})$, for some positive integer $k$.
 Then $W$ admits the following orthogonal decomposition:
\begin{equation}\label{grassmanian model}
	W= X \oplus \lambda^r\mathcal{H}_+.
\end{equation}
Condition \eqref{span} implies that $X$ is linearly full.   
Moreover, since $X$ is holomorphic, the shift-invariance property \eqref{shiftinvariance} and property  \eqref{lambda2} are respectively equivalent to:
\begin{equation}\label{propV}
	a)	\,\,  \lambda X\subset X\mod\lambda^r\mathcal{H}_+; \qquad b)\,\, 
	\lambda \partial_z \Gamma(X) \subset \Gamma(X) \mod\lambda^r\mathcal{H}_+.  
\end{equation}

Since   $X:S^2\to G_k(\C^{rn})$ is holomorphic, the pullback of the invariant metric $h_{rn}$ is locally given by \cite[equation (15)]{JiaoPeng2003} 
  \begin{equation}\label{pullback}X^*h_{rn}=\tr A'_X(A'_X)^* dzd \bar z.
  	\end{equation} 
 We have the following: 
\begin{lemma}\label{metricslemma}
	Let $\psi:S^2\to \U(n)$ be a harmonic map with uniton number $r>0$, let $W=\Psi\mathcal{H}_+$ be a normalized extended solution of $\psi$, and let  $X:S^2\to G_k(\C^{rn})$ be  the corresponding holomorphic map,  defined by \eqref{Vdef}. Then $\psi$ is an immersion  if and only if $X$ is an immersion. Moreover, in this case, the  induced metrics coincide: $X^*h_{rn}= \psi^*h_n $. 
\end{lemma}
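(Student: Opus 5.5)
The plan is to show that the two densities $\tr A'_X(A'_X)^*$ and $\tr A^\psi_z(A^\psi_z)^*$ coincide pointwise on $S^2$. Both maps are weakly conformal: $\psi$ by the preliminaries, $X$ because it is holomorphic. So each map is an immersion exactly where its density is nonzero, and by \eqref{pullbackpsi} and \eqref{pullback} equality of the densities gives both assertions of the lemma. To compare them, I will use the $\partial_z$-second fundamental form of the infinite-rank bundle $W\subset\underline{\mathcal{H}}$, namely $A'_W(s)=\pi_W^\perp\partial_z s$, as an intermediary. I will compute it once in terms of $\Psi$ and once in terms of $X$.

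\emph{Step 1 (in terms of $\psi$).} Every section of $W$ has the form $s=\Psi f$ with $f$ a section of $\underline{\mathcal{H}_+}$. Because $A'_W$ is tensorial, it suffices to take $f$ constant. By \eqref{PSI},
$$\partial_z s=\Psi(1-\lambda^{-1})A^\psi_z f .$$
Since $W^\perp=\Psi\mathcal{H}_+^\perp$ and $\Psi$ acts unitarily on $L^2(S^1,\C^n)$, we get $\pi_W^\perp\partial_z s=\Psi\,\pi_{\mathcal{H}_+}^\perp(-\lambda^{-1}A^\psi_z f)=-\Psi\lambda^{-1}A^\psi_z f_0$, where $f_0\in\C^n$ is the constant Fourier coefficient of $f$. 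Consequently $A'_W$ vanishes on $\Psi\lambda\mathcal{H}_+$. On the orthogonal complement $\Psi\C^n$, it is unitarily conjugate to $v\mapsto -\lambda^{-1}A^\psi_z v$, viewed as a map $\C^n\to\lambda^{-1}\C^n$. Taking Hilbert--Schmidt norms gives $\tr A'_W(A'_W)^*=\tr A^\psi_z(A^\psi_z)^*$, and in particular $A'_W$ has finite rank.

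\emph{Step 2 (in terms of $X$).} Work on the open dense set where $X$ has rank $k$ and \eqref{grassmanian model} holds. Since $\lambda^r\mathcal{H}_+$ is a constant subspace, $A'_W$ vanishes on its constant sections. The complement is $W^\perp=\mathcal{H}_+^\perp\oplus(X^\perp\cap\C^{rn})$, where $X^\perp$ is taken inside $\C^{rn}$. For a section $s$ of $X$, the derivative $\partial_z s$ lies in $\C^{rn}$, so $\pi_W^\perp\partial_z s=\pi_X^\perp\partial_z s=A'_X(s)$. Hence, with respect to the orthogonal decomposition $W=X\oplus\lambda^r\mathcal{H}_+$, $A'_W=A'_X\oplus 0$, and so $\tr A'_W(A'_W)^*=\tr A'_X(A'_X)^*$.

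Combining the two steps gives equality of the densities on a dense open set. Both sides are smooth on $S^2$: the right-hand side because $X$ has been extended to a global smooth map into $G_k(\C^{rn})$. So the equality holds everywhere by continuity, and the lemma follows. The step needing the most care is Step 1. There one must justify that the projection onto $W^\perp$ is computed by conjugating with the pointwise-unitary multiplication operator $\Psi$. One must also check that the Hilbert--Schmidt trace on $L^2$ is legitimate, which holds because $A'_W$ has finite rank. The normalization $h_n=\frac18\tr$ together with the factor $\tfrac12$ in $A^\psi_z$ matches \eqref{pullbackpsi}, so no constants are lost.
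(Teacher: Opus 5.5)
Your proposal is correct and follows the paper's proof. The paper also uses $A'_W$ as the intermediary. It computes $A'_W$ on the unitary frame $\Psi(\lambda^i e_j)$, getting $-\lambda^{-1}\Psi A^\psi_z e_j$ for $i=0$ and zero for $i\geq 1$, and then notes that $A'_X=A'_W$ on $X$ while $A'_W$ vanishes on $\lambda^r\mathcal{H}_+$. Your Hilbert--Schmidt phrasing, the weak-conformality argument for the immersion equivalence, and the density/continuity step only spell out what the paper compresses into ``the result follows.''
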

\begin{proof}
	 Let $e_1,\ldots ,e_n$ be the canonical complex  basis of $\C^n$. Since the action of $\Omega \U(n)$ on $\mathcal{H}$ preserves the $L^2$-inner product,    the sections $\Psi(\lambda^i e_j)$,  with $i \in \{0,1,\ldots \}$ and $j \in \{1, \ldots, n\}$,  form a unitary frame for $W$; while the sections $\Psi(\lambda^{-l }e_j)$,  with $l \in  \{1,2,\ldots \}$ and $j\in \{1, \ldots, n\}$,  form a unitary frame for $W^\perp$.  Let us compute the $\partial_z$-second fundamental form ${A}^{\prime}_W$ of $W$ (with respect to the $L^2$ inner product $\langle\cdot ,\cdot \rangle_{L^2}$ on $\mathcal{H}$). 	By \eqref{PSI}, we have $\Psi^{-1}\partial_z\Psi=(1-\lambda^{-1})A^{\psi}_z$, where $A^{\psi}_z=\frac12 \psi^{-1}\partial_z \psi$. 
Then,	for $s=\Psi(\lambda^i e_j)$, we have
	\begin{equation*}\label{secondfundamentalformW}
		\begin{aligned}	{A}^{\prime}_W(s)&= \pi_{W}^\perp  {\partial_z } s
=\pi_{W}^\perp\Psi(1-\lambda^{-1})A^{\psi}_z(\lambda^ie_j)= \Psi \pi_{{\mathcal H}_+}^\perp(1-\lambda^{-1})A^{\psi}_z(\lambda^ie_j).
		\end{aligned}
	\end{equation*}
	Hence,
	\begin{equation*}
	{A}'_W(s) = 
		\begin{cases}
			-\lambda^{-1} \Psi A^{\psi}_z(e_j) & \text{if } i = 0 \\
			0 & \text{if } i \geq 1
		\end{cases}.
	\end{equation*}
	From this, we obtain
	\begin{equation}\label{AW}
		\left\langle {A}'_W(\Psi(\lambda^i e_j)), \Psi(\lambda^{-l}e_m) \right\rangle_{L^2}= 
		\begin{cases}
			-\left \langle A^{\psi}_z(e_j), e_m\right \rangle & \text{if $i = 0$ and $l=1$}  \\
			0 & \text{if $i > 0$ or $l>1$} 
		\end{cases}.
	\end{equation}
Now, let  $X:S^2\to G_k(\C^{rn})$  be defined as in \eqref{Vdef}. For all $s\in \Gamma(X)\subset \Gamma(W)$, since $\partial_zs\in\bigoplus_{j=0}^{r-1}\lambda^j\mathbb C^n$, which is orthogonal to $\lambda^rH_+$, the orthogonal projections $\pi_X^\perp$ and $\pi_W^\perp$ agree on $\partial_zs$. Hence, for all $s\in\Gamma(X)\subset\Gamma(W)$, we have
	\begin{equation}\label{AVAW}
		{A}^{\prime}_X(s)= \pi_{X}^\perp\circ {\partial_z } s= \pi_{W}^\perp\circ {\partial_z } s={A}^{\prime}_W(s); 
\end{equation}
while ${A}'_W(s)=0$ for every $s\in \Gamma(\lambda^r{\mathcal H}_+)$. 
	Placing  \eqref{AW} and \eqref{AVAW} together gives, in view of \eqref{pullbackpsi} and \eqref{pullback},
	\begin{equation*}
		\begin{aligned}
			X^*h_{rn}&=	 \tr {A}'_X({A}'_X)^*dzd\bar z =  \tr {A}'_W({A}'_W)^*dzd\bar z= \tr A^{\psi}_z( A^{\psi}_z)^*dzd\bar z=\psi^*h_n,
		\end{aligned}
	\end{equation*}
	and the result follows.
\end{proof}

Lemma \ref{metricslemma} therefore reduces the classification of minimal immersions $\psi:S^2\to \U(n)$ with constant curvature  to that of  linearly full holomorphic  immersions of constant curvature $X:S^2\to G_k(\C^{rn})$ satisfying  \eqref{propV}.

\begin{definition}
		A harmonic map $\psi:S^2 \to \U(n)$ is said to be \emph{totally unramified} if the associated holomorphic map $X:S^2 \to G_k(\C^{rn})$ defined by \eqref{Vdef}  is totally unramified.
\end{definition}

\section{$S^1$-invariant constantly curved minimal immersions of $S^2$ into $\U(n)$}\label{sec:s1invariant}
An extended solution $\Psi:S^2\to \Omega \U(n)$ is said to be $S^1$-\emph{invariant} \cite[\S 3]{burstallguest} if it takes values in the $\U(n)$-conjugacy class of some homomorphism $\gamma:S^1\to\U(n)$; equivalently, $W=\Psi \mathcal{H}_+$ is of the form 
\begin{equation}\label{s1invariant}
W=X\oplus\lambda^r\mathcal{H}_+,\quad \mbox{with $X= X_0\oplus\lambda X_1\oplus\lambda^2 X_2\oplus\ldots \oplus\lambda^{r-1} X_{r-1}$},
\end{equation}
for some  integer $r>0$, where each $X_j:S^2\to G_{k_j}(\C^n)$ is holomorphic. In this case,  conditions \eqref{propV} reduce to
$$a)\,\, X_j\subset X_{j+1};\qquad b)\,\, \partial_z \Gamma(X_j)\subset \Gamma(X_{j+1});$$ 
for every $j\in\{0,\ldots, r-2\}$. We have
\begin{equation}\label{PSIinvariant}
\Psi_\lambda=\pi_{X_0}+\lambda \pi_{X_1\cap X_0^\perp}+\lambda^2 \pi_{X_2\cap X_1^\perp}+\ldots +\lambda^r\pi_{X_{r-1}^\perp}.
\end{equation}
If $\psi:S^2\to \U(n)$ admits an $S^1$-invariant extended solution, we also say that $\psi$ is \emph{$S^1$-invariant}.
 \begin{theorem}\label{thm:s1invariant}
Let $\Psi:S^2\to \Omega \U(n)$ be a normalized  $S^1$-invariant extended solution. Then $\psi=\Psi_{-1}:S^2\to \U(n)$ is a minimal immersion  with constant curvature  if and only if each $X_j:S^2\to G_{k_j}(\C^n)$ in \eqref{s1invariant} is a linearly full holomorphic immersion with constant curvature. 	
 \end{theorem}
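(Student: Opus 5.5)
The plan is to reduce everything to Lemma~\ref{metricslemma} and then to Calabi-type rigidity for holomorphic curves in projective space, applied through the Plücker embedding.

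\textbf{Step 1: splitting the metric.} Write $X=\bigoplus_{j=0}^{r-1}\lambda^jX_j\subset\C^{rn}=\bigoplus_j\lambda^j\C^n$. The summands $\lambda^j\C^n$ are mutually orthogonal, and $\partial_z$ preserves $\Gamma(\lambda^jX_j)\subset\Gamma(\lambda^j\C^n)$. The projection $\pi_X^\perp$, restricted to $\lambda^j\C^n$, equals $\lambda^j\pi_{X_j}^\perp$. Hence $A'_X|_{\lambda^jX_j}=\lambda^jA'_{X_j}$, and the images lie in mutually orthogonal subspaces. By \eqref{pullback} and Lemma~\ref{metricslemma},
\begin{equation*}
\psi^*h_n=X^*h_{rn}=\sum_{j=0}^{r-1}\tr A'_{X_j}(A'_{X_j})^*\,dzd\bar z=\sum_{j=0}^{r-1}X_j^*h_n .
\end{equation*}
Equivalently, take local holomorphic Plücker lifts $\sigma_j$ of $X_j$ (a wedge of a local holomorphic frame of $X_j$). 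Then $X_j^*h_n=c\,\partial_z\partial_{\bar z}\log|\sigma_j|^2\,dzd\bar z$ for a universal constant $c$. So $\psi^*h_n$ is, up to that constant, the metric induced by the single holomorphic curve
\begin{equation*}
F=[\sigma_0\otimes\cdots\otimes\sigma_{r-1}]:S^2\to P\Big(\textstyle\bigotimes_j\Lambda^{k_j}\C^n\Big)
\end{equation*}
with its Fubini--Study metric.

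\textbf{Step 2: the two directions.} Suppose $\psi$ is a minimal immersion of constant curvature. Then $F$ is an immersed holomorphic curve of constant curvature. By Calabi's rigidity theorem, $F$ is a Veronese curve in its linear span. After a unitary change and a Möbius reparametrisation this gives $\prod_j|\sigma_j|^2=C(1+|z|^2)^d$. The task is then to split this identity into $|\sigma_j|^2=C_j(1+|z|^2)^{d_j}$ for each $j$, which says that each $X_j$ has constant curvature $4/d_j$ (in the normalisation of $h_n$).

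Conversely, suppose each $X_j$ is a constantly curved holomorphic immersion. The same task appears in reverse: each $X_j$ is individually a Veronese-type curve, but a sum of constantly curved metrics on $S^2$ is constant only when all of them are ``centered'' compatibly. Concretely, we need all $|\sigma_j|^2$ to be powers of the same Hermitian form $1+|z|^2$ in one common coordinate. The immersion claims for $\psi$ and the $X_j$ should follow from the same sum formula, since a sum of nonnegative terms vanishes only if each term does.

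\textbf{Step 3: main obstacle, compatibility via the chain conditions.} The key step, and the one I expect to be hardest, is exploiting the conditions $X_j\subset X_{j+1}$ and $\partial_z\Gamma(X_j)\subset\Gamma(X_{j+1})$, together with normalization \eqref{span}. Normalization forces $X_{r-1}$ (and, via the chain, the relevant Gauss bundles) to be linearly full. My first attempt would be to use these conditions to express $X_{j+1}$ in terms of $X_j$ and its Gauss bundles, $X_{j+1}\supseteq X_j\oplus G^{(1)}(X_j)$, and to compare harmonic sequences. Both $X_j$ and $X_{j+1}$ are then built from the same holomorphic data. If one of them is a Veronese-type (hence $\U(2)$-equivariant) curve, the equivariance should propagate along the chain to the others with the same $\mathrm{SU}(2)$-action. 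This propagation is what would give the common center in Step 2, in both directions.

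As a fallback for the forward direction, I would study the logarithmic singularities and degrees: write $\log|\sigma_j|^2$ as plurisubharmonic functions whose sum is $d\log(1+|z|^2)+\mathrm{const}$. Then I would use the real-analyticity and positivity of each term, together with Calabi's diastasis (the polarised function $\prod_j\langle\sigma_j(z),\sigma_j(w)\rangle$ factors), to force each factor to be a power of $1+z\bar w$.

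Finally, I would check that linear fullness of each $X_j$ in the statement is understood in the appropriate span, and obtain it from normalization as above.
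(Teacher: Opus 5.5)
\textbf{Forward direction.} Your Step 1 is exactly the paper's identity $X^*h_{rn}=\sum_jX_j^*h_n$. But you leave open the one step that actually needs proof, and the tool you propose for it (propagating $\mathrm{SU}(2)$-equivariance along $X_j\subset X_{j+1}$, $\partial_z\Gamma(X_j)\subset\Gamma(X_{j+1})$) is not needed.

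The paper splits the product directly:
\begin{itemize}
\item Take polynomial lifts $\widehat\sigma_j$ of the Plücker curves with no common zeros on $\C$, so that $\beta_j=\|\widehat\sigma_j\|^2\in\C[z,\bar z]$.
\item In a coordinate with $X^*h_{rn}=\alpha(1+z\bar z)^{-2}dzd\bar z$, the function $\log\bigl(\beta_0\cdots\beta_{r-1}(1+z\bar z)^{-\alpha}\bigr)$ is harmonic on $\C$. It therefore equals $\log|e^f|^2$ with $e^f$ entire, zero-free and of polynomial growth, hence constant.
\item Since $1+z\bar z$ is irreducible in $\C[z,\bar z]$, unique factorization forces $\beta_j=C_j(1+z\bar z)^{\alpha_j}$.
\end{itemize}
Your polarisation ``fallback'' is this argument in disguise. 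It only closes once you fix polynomial lifts and invoke irreducibility of $1+z\bar w$. Immersivity of each $X_j$ then comes from $\alpha_j\ge 1$. It does not follow from the sum formula, since a positive sum need not have positive summands.

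\textbf{Converse.} Here you have identified a real issue, which the paper's one-line argument passes over. The paper reads ``each $X_j^*h_n$ is a positive multiple of the round metric'' as one common round metric, and separate constant curvature does not give that. Your repair via the chain conditions cannot work: they do not force a common round metric, and the converse as stated is false.

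Here is a counterexample with $n=5$, $r=2$.
\begin{itemize}
\item Let $\nu=\sum_{k=0}^4\binom4k^{1/2}z^ke_k$ and $X_0=\Span\{\nu\}$.
\item The quartic $Q(k)=\frac1{24}\bigl(k^2(k-1)^2+10k^2+14k+24\bigr)$ interpolates $2^k$ at $k=0,\dots,4$ and is positive on $\mathbb R$. So $Q(k)=|p(k)|^2$ for some complex $p(k)=a+bk+ck(k-1)$.
\item Then $v=a\nu+bz\nu'+cz^2\nu''=\sum_kp(k)\binom4k^{1/2}z^ke_k$ has $\|v\|^2=(1+2|z|^2)^4$.
\item Set $Je_k=(-1)^ke_{4-k}$. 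Then $\sum_i(J\nu)_i(z)\nu_i(w)=(z-w)^4$. Hence $\omega=Jv=a\,J\nu+bz\,(J\nu)'+cz^2(J\nu)''$ annihilates $\nu$ and $\nu'$ under the bilinear pairing, because mixed derivatives of order at most $3$ of $(z-w)^4$ vanish on $w=z$.
\item So $X_1=\Span\{\bar\omega\}^\perp$ is a holomorphic rank-$4$ bundle with $X_0\subset X_1$ and $\partial_z\Gamma(X_0)\subset\Gamma(X_1)$. Thus $W=X_0\oplus\lambda X_1\oplus\lambda^2\mathcal H_+$ is a normalized $S^1$-invariant extended solution.
\end{itemize}
Both $X_j$ are linearly full holomorphic immersions of constant curvature:
\begin{equation*}
X_0^*h_5=4(1+|z|^2)^{-2}dzd\bar z,\qquad X_1^*h_5=8(1+2|z|^2)^{-2}dzd\bar z.
\end{equation*}
Yet their sum $\psi^*h_5$ is not of constant curvature. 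A rotationally invariant round metric has the form $C(1+\kappa|z|^2)^{-2}dzd\bar z$, which has a single double pole in $t=|z|^2$, whereas this sum has double poles at $t=-1$ and $t=-1/2$.

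So the converse needs the extra hypothesis that all $X_j^*h_n$ are multiples of one round metric; with it, the converse is immediate from Step 1. No chain argument can substitute for that hypothesis in general, although for $n=3$, where $X_1=X_0\oplus G^{(1)}(X_0)$, the converse does hold.
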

\begin{proof}
Assuming that the $S^1$-invariant extended solution $W=\Psi \mathcal{H}_+=X\oplus \lambda^r\mathcal{H}_+$ is normalized, it is obvious that each  holomorphic map $X_j:S^2\to G_{k_j}(\C^n)$ in \eqref{s1invariant} must be linearly full. 
 
 We have
 \begin{equation}\label{flagmetric}
X^*h_{rn}=\sum_{j=0}^{r-1} X^*_jh_n.  
	 \end{equation}
Assume that $\psi=\Psi_{-1}:S^2\to \U(n)$ is a minimal immersion with constant curvature. By Lemma \ref{metricslemma}, the holomorphic map  $X:S^2\to G_k(\C^{rn})$ is an immersion of constant curvature $K$. From \cite[Theorem 4.1]{JiaoPeng2003}, we know that $K=4/\alpha$ for some positive integer $\alpha$. Then, by Minding's theorem, we can choose a complex coordinate $z$ on $S^2\setminus\{\infty \}$ such that 
	 \begin{equation}\label{constantcurvature}
	 	X^*h_{rn}=\frac{\alpha}{(1+z\bar{z})^2}dzd\bar z= \partial_z\partial_{\bar z}  \log (1+z\bar{z})^{\alpha}dzd\bar z.
	 \end{equation}
For each $j\in\{0,\ldots,r-1\}$, we compose the holomorphic map $X_j$ with the Pl\"{u}cker embedding  
$\iota: G_{k_j}(\C^n)\to \CP^{n_j-1}$, with $n_j={n \choose k_j}$,  to obtain  a holomorphic map 
$\sigma_j:S^2\to\CP^{n_j-1}$. This holomorphic map $\sigma_j$ satisfies
${\sigma_j}^*h_{n_j}=X_j^*h_n$ (see \cite[\S 3]{JiaoPeng2003}). 
Let $\widehat \sigma_j:\C\to \C^{n_j}$ be a polynomial holomorphic local section of $\sigma_j$, with $\C\cong S^2\setminus\{\infty\}$. Without loss of generality, we can assume that $\widehat \sigma_j$ is nowhere vanishing, as we can always remove the greatest common divisor of its components.  Set $\beta_j=\|\widehat \sigma_j\|^2,$ which is (nonconstant) polynomial in $z$ and $\bar z$. From  \cite[\S 3]{JiaoPeng2003}, we have
\begin{equation}\label{eq:logbetaj}
	 X_j^*h_n=\partial_z \partial_{\bar z} \log \beta_j dzd\bar z.
\end{equation}    
Hence, in view of \eqref{flagmetric} and \eqref{eq:logbetaj}, 
\begin{equation}\label{metric3}
	X^*h_{rn}=\partial_z \partial_{\bar z} \log\left(\beta_0\ldots  \beta_{r-1}\right) dzd\bar z.
\end{equation}
Comparing \eqref{constantcurvature} and \eqref{metric3}, we deduce that the following holds on $\C$:
\begin{equation*}
	\partial_z\partial_{\bar z}  \log \frac{ \beta_{0}\ldots \beta_{r-1}}{(1+z \bar{z})^{\alpha}}=0.
\end{equation*}
Hence, $\log \frac{ \beta_{0}\ldots \beta_{r-1}}{(1+z \bar{z})^{\alpha}}$ is a harmonic function on $\mathbb{C},$ which implies that there exists an entire function $f$ such that 
$\beta_{0}\ldots \beta_{r-1}= (1+z\bar z)^{\alpha} e^{f}\overline{e^{f}}.$ Since the left-hand side is a polynomial in \(z\) and \(\bar z\), the entire function \(e^f\) must be constant. Since  $1+z\bar z$ is irreducible in $\C[z,\bar z]$ and each $\beta_j$ is polynomial in $z$ and $\bar z$, we must have 
	$\beta_j(z,\bar z)=C_j(1+z\bar z)^{\alpha_j}$
for some positive real constant $C_j$ and positive integer $\alpha_j$, with $\alpha=\alpha_0+\ldots+\alpha_{r-1}$.
It follows from \eqref{eq:logbetaj}  that
$$X_j^*h_{n}= \frac{\alpha_j}{(1+z \bar{z})^2}dz d\bar z, \quad \text{for each}\, j\in\{0,\ldots,r-1\},$$
which shows that each $X_j$ is an immersion with constant curvature.

Conversely, if each $X_j^*h_n$ is a positive multiple of the round metric on $S^2$, then \eqref{flagmetric} immediately implies that the same is true for $X^*h_{rn}$. Hence, by Lemma \ref{metricslemma}, $\psi$ is a minimal immersion with constant curvature.
\end{proof}

\section{Constantly curved minimal immersions of  $S^2$ into $\U(3)$}\label{immersioninu3}
In this section, we investigate minimal immersions of constant curvature from $S^2$ into $\U(3)$. In this setting, the uniton number of a minimal immersion $\psi : S^2 \to \U(3)$  can only be $1$ or $2$. We will denote by 	$V_0^n,\ldots, V^n_n :S^2 \rightarrow \mathbb{C}P^n$ the \emph{Veronese sequence}, defined as in \cite[Theorem 5.2]{Bolton}.

\subsection{Uniton number $r=1$}

\begin{theorem}\label{uniton1}
	Let $\psi:S^2 \to \U(3)$ be a  minimal immersion with constant curvature and uniton number $1$. Then $\psi$ is unitarily congruent to one of the following minimal immersions:
	\begin{enumerate}
		\item $\pi_{V_0^2}-\pi_{V_0^2}^\perp$;
		\item $\pi_{V_0^2\oplus V_1^2}-\pi_{V_0^2\oplus V_1^2}^\perp$;
		\item $\pi_{V_0^1\oplus \C}-\pi_{V_0^1\oplus \C}^\perp$, where  $\C\cong(V^1_0\oplus V^1_1)^\perp\subset \C^3$. 
	\end{enumerate}
\end{theorem}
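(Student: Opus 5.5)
With $r=1$, a normalized extended solution has the form $W=X_0\oplus\lambda\mathcal H_+$, where $X_0:S^2\to G_k(\C^3)$ is holomorphic. This is automatically $S^1$-invariant. Conditions \eqref{propV} are vacuous, since $\lambda X_0\subset\lambda\mathcal H_+$. By \eqref{PSIinvariant}, $\Psi_\lambda=\pi_{X_0}+\lambda\pi_{X_0}^\perp$, hence $\psi=\Psi_{-1}=\pi_{X_0}-\pi_{X_0}^\perp$, up to left multiplication by a constant, which we absorb into unitary congruence. By Theorem \ref{thm:s1invariant}, $\psi$ is a minimal immersion with constant curvature if and only if $X_0$ is a linearly full holomorphic immersion with constant curvature into $G_k(\C^3)$. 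Because $\psi$ is nonconstant, $k\in\{1,2\}$. The problem therefore reduces to classifying linearly full holomorphic constantly curved immersions $S^2\to\CP^2$ and $S^2\to G_2(\C^3)$.

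For $k=1$, Calabi's rigidity theorem \cite{Calabi1958} says that a linearly full constantly curved holomorphic curve in $\CP^2$ is unitarily congruent to $V_0^2$. This gives case (1).

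For $k=2$, pass to the orthogonal complement $X_0^\perp$, which is an antiholomorphic line bundle. Its conjugate defines a holomorphic curve in $\CP^2$, and $\iota(X_0^\perp)=-\iota(X_0)$, so the induced metrics agree. Alternatively, we can argue through the Plücker picture as in the proof of Theorem \ref{thm:s1invariant}. This conjugate curve $\overline{X_0^\perp}:S^2\to\CP^2$ is holomorphic with constant curvature, but it need not be linearly full.

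We then distinguish two subcases.
\begin{itemize}
\item If it is linearly full, Calabi's theorem gives $\overline{X_0^\perp}\cong V_0^2$. The antiholomorphic line $X_0^\perp$ is then the last element of a harmonic sequence whose first element is the holomorphic curve $G^{(-2)}$, so $X_0^\perp\cong V_2^2$ and $X_0=V_0^2\oplus V_1^2$. This gives case (2).
\item If it is not linearly full, then $X_0^\perp$ lies in a fixed plane $\C^2$, so $X_0$ contains the fixed line $L=(\C^2)^\perp$ and $X_0=L\oplus Y$, with $Y$ a holomorphic curve in $\CP^1=P(\C^2)$. The line $Y$ must be nonconstant, since otherwise $X_0$ would be constant. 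The condition that $X_0$ is linearly full holds automatically once $Y$ is nonconstant, because then $Y$ spans $\C^2$. Constant curvature of $X_0$ is equivalent to constant curvature of $Y$, since the constant summand $L$ does not change the second fundamental form: $A'_{X_0}$ restricted to $L$ vanishes, and on $Y$ it agrees with $A'_Y$. Calabi's theorem applied in $\CP^1$ then gives $Y\cong V_0^1$, which is case (3).
\end{itemize}
The statement is read with degree-$d$ Veronese curves allowed, understood up to reparametrization; in fact Calabi's theorem forces degree one, since $V_0^1$ is the full constantly curved curve.

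The main obstacle is the $k=2$ analysis. Specifically, one must justify that $X_0^\perp$, after conjugation, is a constantly curved holomorphic curve with the same induced metric. The cleanest route is to note that the Cartan images of $X_0$ and $X_0^\perp$ differ by a sign, which is an isometry of $\U(3)$. One must also check carefully that the degenerate case produces exactly the fixed-line splitting in (3).

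Finally, each of the three listed maps does satisfy the hypotheses: each comes from an $S^1$-invariant extended solution whose $X_0$ is a linearly full constantly curved holomorphic immersion. By Theorem \ref{thm:s1invariant}, each is therefore a minimal immersion with constant curvature.
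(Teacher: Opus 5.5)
Your proposal is correct and follows the paper's proof. Both arguments write $W=X_0\oplus\lambda\mathcal H_+$ and use Theorem~\ref{thm:s1invariant} to reduce to a linearly full constantly curved holomorphic immersion $X_0$ into $G_k(\C^3)$; then $k=1$ is handled by Calabi, and $k=2$ by passing to the antiholomorphic $X_0^\perp$, which gives $V_2^2$ in the full case and $V_1^1$ in a fixed $\C^2$ otherwise (your $X_0=L\oplus Y$ with $Y\cong V_0^1$).

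Two of your justifications should be tidied. The step to $X_0^\perp\cong V_2^2$ is cleanest via the direct observation that $\overline{V_0^2}$ is unitarily congruent to $V_2^2$, rather than via the harmonic sequence through $G^{(-2)}$. In the non-full case, degree one follows because $X_0$, and hence $Y$, is an unbranched immersion, so no reinterpretation of the statement with degree-$d$ Veronese curves is needed.
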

\begin{proof}
 When the uniton number is $1$, any normalized extended solution is $S^1$-invariant; more precisely, it is of the form $W= X_0 \oplus\lambda \mathcal{H}_+,$  where $X_0:S^2\to G_k(\C^3)$ is a linearly full holomorphic map for some $k \in \{1,2\}$. By Theorem \ref{thm:s1invariant}, $X_0$ is a linearly full holomorphic immersion with constant curvature.  

For $k=1$, by Calabi's  rigidity result \cite[Theorem 1.1]{Lawson}, $X_0$ is unitarily congruent to $V_0^2$. For $k=2$, $X_0^\perp:S^2\to\CP^2$ is an antiholomorphic immersion with constant curvature, not necessarily linearly full, hence, after unitary transformation, either $X_0^\perp=V_2^2$, in which case $X_0=V_0^2\oplus V_1^2$, or $X_0^\perp=V_1^1$, in which case $X_0=V_0^1\oplus \C$, where  $\C\cong(V^1_0\oplus V^1_1)^\perp$. 
\end{proof}

\subsection{Uniton number $r=2$}
We first describe the structure of a normalized extended solution  of uniton number $2$. 
\begin{lemma}\label{unitonnumber2}\begin{enumerate}
		\item Let $X_0\subset X_1$ be two linearly full holomorphic subbundles of  $S^2 \times \C^3$,  
		such that: $\mathrm{rank}\,X_0=1$; $\mathrm{rank}\,X_1=2$; and
		$\partial_z\Gamma(X_0)\subset 	\Gamma(X_1)$. Let $R$ be a holomorphic line subbundle of   $X_0\oplus \lambda X_1^\perp$, that is, 
		$$\pi_{R^\perp \cap (X_0\oplus \lambda X_1^\perp)}\partial_{\bar z}\Gamma(R)=0,$$ such that $\pi_{X_0}(R)=X_0$ almost everywhere.
		Then $W=X\oplus\lambda^2\mathcal{H}_+$, with $X=R\oplus \lambda X_1$,
		is a normalized extended solution on $S^2$ of uniton number $2$. 
		\item Conversely, any normalized extended solution $W= X\oplus \lambda^2\mathcal{H}_+$ on $S^2$
		of uniton number $2$ is of this form.
\end{enumerate}\end{lemma}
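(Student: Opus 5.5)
For part (1), I will check conditions \eqref{shiftinvariance}--\eqref{span} for $W=X\oplus\lambda^2\mathcal{H}_+$, using the reformulation \eqref{propV} of shift-invariance and of \eqref{lambda2} for holomorphic $X$.

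\emph{Holomorphicity of $X$.} Write $X=R\oplus\lambda X_1$. Then $X_0\oplus\lambda X_1^\perp$ is the orthogonal complement of $\lambda X_1$ inside $X_0\oplus\lambda\C^3$. The bundle $\lambda X_1$ is holomorphic, and so is $X_0\oplus\lambda\C^3$. Hence $X$ is holomorphic exactly when $\partial_{\bar z}$ of a section of $R$ lies in $R\oplus\lambda X_1$. Since $\partial_{\bar z}\Gamma(R)\subset\Gamma(X_0\oplus\lambda\C^3)$, this is the stated condition $\pi_{R^\perp\cap(X_0\oplus\lambda X_1^\perp)}\partial_{\bar z}\Gamma(R)=0$.

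\emph{Condition \eqref{propV}a.} Modulo $\lambda^2\mathcal{H}_+$, we have $\lambda R\subset\lambda X_0\subset\lambda X_1$ and $\lambda^2X_1\equiv0$.

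\emph{Condition \eqref{propV}b.} Modulo $\lambda^2$, $\lambda\partial_z(\lambda X_1)\equiv 0$. Write a section of $R$ as $s=a+\lambda b$ with $a\in\Gamma(X_0)$. Then $\lambda\partial_z s\equiv\lambda\partial_z a\in\lambda X_1$, using $\partial_z\Gamma(X_0)\subset\Gamma(X_1)$.

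\emph{Normalization and uniton number.} The hypothesis $\pi_{X_0}(R)=X_0$ a.e., together with the linear fullness of $X_0$ and $X_1$, gives that $\bigcup W$ contains $\lambda\C^3$. It also contains the projections onto $\C^3$ of the fibres of $R$, which span $\C^3$. So \eqref{span} holds. The uniton number is exactly $2$ because $W\not\supset\lambda\mathcal{H}_+$: otherwise $\lambda X_1$ would be all of $\lambda\C^3$, contradicting $\mathrm{rank}\,X_1=2$.

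For the converse (2), start from $X=W\cap(\C^3\oplus\lambda\C^3)$ as in \eqref{Vdef}, and set
\[
X_1=\{v:\lambda v\in X\},\qquad X_0=\pi_0(X),
\]
where $\pi_0$ is the projection onto the $\lambda^0$-component. By \eqref{propV}a, $\lambda X\subset X$ mod $\lambda^2$, which gives $X_0\subset X_1$. Since $X\ni s=a+\lambda b$ implies $\lambda\partial_z s\equiv\lambda\partial_z a$, \eqref{propV}b gives $\partial_z\Gamma(X_0)\subset\Gamma(X_1)$. Both bundles are holomorphic: $\lambda X_1=X\cap\lambda\C^3$ is an intersection of holomorphic bundles, and $X_0$ is a holomorphic image. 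Removable singularities extend them over $S^2$.

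Next come the rank constraints.
\begin{itemize}
\item $X_0\neq0$, since otherwise $W\subset\lambda\mathcal{H}_+$, contradicting normalization.
\item $X_1\neq\C^3$, since otherwise $W\supset\lambda\mathcal{H}_+$ and the uniton number would be at most $1$.
\item $X_0\neq X_1$. If $X_0=X_1$, then $\partial_z X_0\subset X_0$, so $X_0$ would be constant, contradicting linear fullness, which itself follows from \eqref{span}.
\end{itemize}
Together these force $\mathrm{rank}\,X_0=1$ and $\mathrm{rank}\,X_1=2$. Linear fullness of $X_0$ and $X_1$ follows from \eqref{span}.

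Finally, $X\cap(X_0\oplus\lambda X_1^\perp)$ has rank $\mathrm{rank}\,X-\mathrm{rank}\,X_1=1$; call this line bundle $R$. Then $X=R\oplus\lambda X_1$ orthogonally, and $\pi_{X_0}(R)=X_0$ generically because $\pi_0(X)=X_0$. The holomorphicity of $R$ in the stated sense is obtained by reversing the computation from part (1).

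The main obstacle I expect is in (2): handling the finitely many singular points where ranks drop, and verifying that $R$ extends to a holomorphic line bundle with $\pi_{X_0}(R)=X_0$ only almost everywhere. I would handle this by working on the open dense set where all ranks are constant, and then extending across the punctures by the standard filling-out of holomorphic subbundles used after \eqref{Vdef}.
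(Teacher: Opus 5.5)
Your proposal is correct and follows essentially the same route as the paper: in (1) you verify \eqref{propV}, holomorphicity and normalization directly (spelling out the holomorphicity equivalence for $R$ that the paper only asserts), and in (2) you take $X_0=p_0(X)$, $X_1=p_1(X\cap\lambda\C^3)$, let $R$ be the orthogonal complement of $\lambda X_1$ in $X$, and fix the ranks by the same exclusions. One small point to tighten: ``$X_0=X_1$ constant'' contradicts linear fullness only because you already have $X_1\neq\C^3$, since a constant linearly full subbundle is all of $\C^3$; your ordering of the bullets supplies this.
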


\begin{proof} Let $W$ be as in the first statement. Then $W$  satisfies the shift-invariance property \eqref{shiftinvariance} because $X_0\subset X_1$.  The holomorphicity of $W$ follows immediately from the holomorphicity of $X_0$, $X_1$,  and $R$. The assumption $\partial_z\Gamma(X_0)\subset\Gamma(X_1)$ implies that $W$ satisfies $\lambda \partial_z\Gamma(W)\subset \Gamma(W)$. Finally, since $X_0$ and $X_1$ are linearly full,       
$\bigcup_{z\in S^2}W(z)$ spans $\mathcal{H}_+$. Hence $W$ is a normalized extended solution of uniton number $r=2$. 

Conversely, assume that $W= X\oplus \lambda^2{\mathcal{H}}_+$ is a normalized extended solution on $S^2$ of uniton number $2$. Consider the  projections $p_0,p_1: {\C}^3\oplus \lambda  {\C}^3\to {\C}^3$ defined by $p_0(a+\lambda b)=a$ and $p_1(a+\lambda b)=b$. Since $X\subset  {\C}^3\oplus \lambda  {\C}^3$ is holomorphic, the vector subbundles  $X_0=p_0(X)$ and $X_1=p_1(X\cap \lambda {\C}^3)$  of $S^2\times{\C}^3$  both have constant rank away from a finite subset of $S^2$. We can fill out the zeros to obtain globally defined holomorphic vector subbundles of $S^2\times \C^3$, which we continue to denote by $X_0$ and  $X_1$. It follows immediately that
$X_0\subset X_1$, since $\lambda X\subset X \mod\lambda^2 \mathcal{H}_+$. Moreover,	$\partial_z\Gamma(X_0)\subset 	\Gamma(X_1)$, since $\lambda \partial_z\Gamma(X)\subset \Gamma (X)\mod \lambda^2\mathcal{H}_+$. 

Let $R$  be the  orthogonal complement of $\lambda X_1$ in $X$. Thus $R=X\cap (X_0\oplus \lambda X_1^\perp)$ and
 $X=R\oplus \lambda X_1$.   The holomorphicity of $W$ implies that $R$ is  holomorphic in $X_0\oplus \lambda X_1^\perp$.
 By construction,  $\pi_{X_0}(R)=p_0(X)=X_0$ almost everywhere. Finally, since $W$ is normalized, both $X_0\subset X_1$ are linearly full,  which implies that $X_0\subsetneq X_1$. Indeed, we have $\partial_{\bar z}\Gamma(X_0)\subset 	\Gamma(X_0)$, since  $X_0$ is holomorphic. If additionally $X_0=X_1$, then 	$\partial_z\Gamma(X_0)\subset 	\Gamma(X_1)=\Gamma(X_0)$, which means that $X_0$ is parallel, hence constant, contradicting linear fullness. Moreover, \(\operatorname{rank}X_1<3\), since otherwise \(X_1=\mathbb C^3\) and \(W\) would have uniton number at most \(1\). Therefore, $\mathrm{rank}\,X_0=1$ and $\mathrm{rank}\,X_1=2$.
		\end{proof}

Let $\psi: S^2\to \U(3)$ be a harmonic map of uniton number 2, with normalized extended solution  $W=X\oplus\lambda^2\mathcal{H}_+$, where $X=R\oplus\lambda X_1:S^2\to G_3(\C^3\oplus \lambda \C^3)$ is defined  as in Lemma \ref{unitonnumber2}. We consider the following orthogonal decompositions of $X$ and $X^\perp$ into line subbundles: 
\begin{equation}\label{mapX}
X=R\oplus \lambda X_0\oplus \lambda(X_1\cap X_0^\perp), \quad X^\perp=(X_1\cap X_0^\perp)\oplus \left((X_0\oplus\lambda X_1^{\perp})\cap {R}^\perp\right)\oplus X_1^\perp.\end{equation}
Observe that   the  $\partial_z$-Gauss bundles of $X_0$ are given by $$G^{(1)}(X_0)=X_1\cap X_0^\perp,\quad  G^{(2)}(X_0)=X_1^\perp,\quad G^{(3)}(X_0)=\{0\}.$$
We now determine the $\partial_z$-Gauss bundles of $X$. It follows from the construction in Lemma \ref{unitonnumber2} that
\begin{align}
	\label{gamma1} & \partial_z\Gamma(\lambda X_0)\subset \Gamma(X);\\
\label{gamma2}&	 \partial_z\Gamma(\lambda (X_1\cap X_0^\perp)) \subset \Gamma(X)\oplus \Gamma\left((X_0\oplus\lambda X_1^{\perp})\cap {R}^\perp\right); \\ \label{gamma3}& \partial_z\Gamma(R)\subset \partial_z\Gamma(X_0\oplus \lambda X_1^\perp)\subset \Gamma(X)\oplus \Gamma(X_1\cap X_0^\perp)\oplus\Gamma \left((X_0\oplus\lambda X_1^{\perp})\cap {R}^\perp\right).
\end{align}
We have therefore the following diagram:
\begin{equation}
	\begin{gathered}\label{diag:ref}
		\xymatrixrowsep{1.5pc}{\xymatrix{
				{R} \ar[rd]  \ar[r] &X_1\cap X_0^\perp \ar[r]  & X_1^\perp  \\	\lambda(X_1\cap X_0^\perp)  \ar[u]  \ar[r] &(X_0\oplus\lambda X_1^{\perp})\cap {R}^\perp  \ar[u] & \\
				\lambda X_0 \ar[u]& &
		}}
	\end{gathered}.
\end{equation}
Recall from \cite{burstall-wood,PW1} that, in a  diagram like this, each arrow between two orthogonal bundles $\varphi_1,\varphi_2$ corresponds to the vector bundle homomorphism $A'_{\varphi_1,\varphi_2}\in \Hom(\varphi_1,\varphi_2)$ defined locally by $ A'_{\varphi_1,\varphi_2}(s)=\pi_{\varphi_2}\partial_zs$, for each $s\in \Gamma(\varphi_1)$.  No arrow is shown whenever the corresponding homomorphism is known to be zero.  In \eqref{diag:ref}, observe that all the horizontal arrows are nonzero as bundle homomorphisms. Indeed, the arrow from $X_1\cap X_0^\perp$ to $X_1^\perp$ is nonzero because $X_0$ is linearly full. 
The arrow from $R$ to $X_1\cap X_0^\perp$ 
is also nonzero, since
\begin{equation}\label{arx}
A'_{R,X_1\cap X_0^\perp}(s)=A'_{X_0}(s_0)
\end{equation}
for every section $s=s_0+\lambda s_1$ of $R$, and
$A'_{X_0}$ is nonzero. Finally, observe that, since the image of $A'_{\lambda(X_1\cap X_0^\perp)}$  is $$\lambda X_1^\perp\subset X_0\oplus \lambda X_1^\perp=R\oplus \left((X_0\oplus\lambda X_1^{\perp})\cap {R}^\perp\right),$$ we have
 $$A'_{\lambda(X_1\cap X_0^\perp)}= A'_{\lambda(X_1\cap X_0^\perp),R}+A'_{\lambda(X_1\cap X_0^\perp),(X_0\oplus\lambda X_1^{\perp})\cap {R}^\perp}.$$
Since $\pi_{X_0}(R)=X_0$ almost everywhere, $\lambda X_1^\perp$ is not contained in $R$, hence  the second homomorphism on the right-hand side,  corresponding to
 the arrow from $\lambda(X_1\cap X_0^\perp)$  to $(X_0\oplus\lambda X_1^{\perp})\cap {R}^\perp$, is also nonzero.  We conclude that $$G^{(1)}(X)=\left(X_1\cap X_0^\perp\right) \oplus\big((X_0\oplus\lambda X_1^{\perp})\cap{R}^{\perp}\big),\,\, G^{(2)}(X)=X_1^\perp,\,\, G^{(3)}(X)=\{0\}.$$

\begin{remark}\label{remark}
	In the setting of Lemma \ref{unitonnumber2}, let $f_0$ be a local  polynomial lift of $X_0$ on $\C\cong S^2\setminus\{\infty\}$.  Since $X_0$  is linearly full, we have $X_0=\Span \{f_0\}$, $X_1=\Span \{f_0,\partial_z f_0\}$ and $\C^3=\Span \{ f_0, \partial_z f_0, \partial^2_z f_0 \}$ away from a finite subset of ${S}^2$.  In particular,
	$\pi_{X_1^\perp}(\partial_z^2f_0)$
	is a local generator of $X_1^\perp$. Since $\pi_{X_0}(R)=X_0$ almost everywhere and $R$ is a holomorphic line subbundle of $X_0\oplus \lambda X_1^\perp$, 
	locally away from a finite
	subset we may choose a generator of $R$ of the form $$
	f_0+Q\lambda\pi_{X_1^\perp}(\partial_z^2f_0),$$
	for some meromorphic function $Q$.
Then
	\begin{equation}\label{Q}
		X= \Span\{\lambda f_0,\lambda \partial_z f_0,f_0+Q\lambda \partial^2_zf_0\}
	\end{equation}
	away from a finite subset of ${S^2}$. When $Q=0$, we have $X=X_0\oplus \lambda X_1$, that is, the extended solution is $S^1$-invariant.      
\end{remark}

\begin{lemma}\label{lem:tounram} Let $W= X\oplus \lambda^2\mathcal{H}_+$ be a normalized extended solution on $S^2$
	of uniton number $2$, and let $X_0 \subset X_1$ be as in  Lemma \ref{unitonnumber2}. If $X$  is totally unramified, then: 
	\begin{enumerate}\item 
$X_0$ is totally unramified (hence it has degree $2$);\item $\pi_{X_0}(R)=X_0$ everywhere on $S^2$; \item 
if $f_0$ is a quadratic polynomial lift of $X_0$ on $\C\cong S^2\setminus\{\infty\}$, then the meromorphic function $Q$ in \eqref{Q} is polynomial, with $\deg Q\leq 2.$   
  \end{enumerate}
	\end{lemma}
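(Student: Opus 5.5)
Throughout, we use the fundamental diagram of $X$ computed just before Remark \ref{remark}:
$$X\to G^{(1)}(X)=\left(X_1\cap X_0^\perp\right)\oplus\big((X_0\oplus\lambda X_1^\perp)\cap R^\perp\big)\to G^{(2)}(X)=X_1^\perp .$$
The plan is to read off each conclusion from total unramifiedness of these two arrows, i.e.\ $\det A'_X(A'_X)^*\neq0$ and $\det A'_{G^{(1)}(X)}(A'_{G^{(1)}(X)})^*\neq0$ everywhere. Since $\mathrm{rank}\,G^{(1)}(X)=2$ and $\mathrm{rank}\,X=3$, the first condition says $A'_X$ is surjective onto $G^{(1)}(X)$ at every point. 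The second says the rank-one map $A'_{G^{(1)}(X)}$ is nonzero everywhere.

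\textbf{Step 1: parts (1) and (2).} First we locate the contributions to $A'_{G^{(1)}(X)}$. By \eqref{gamma2} and the fact that $\partial_z$ of a section of $(X_0\oplus\lambda X_1^\perp)\cap R^\perp$ projects onto $X_1^\perp$ only through its $X_0$-component, its image in $X_1^\perp$ comes from two sources. One is the arrow $X_1\cap X_0^\perp\to X_1^\perp$, which equals $A'_{G^{(1)}(X_0)}$. The other is the vertical arrow from $(X_0\oplus\lambda X_1^\perp)\cap R^\perp$.

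Next we analyse the component $X_1\cap X_0^\perp$ of $G^{(1)}(X)$. By \eqref{arx}, the only arrow into it is from $R$, given by $s_0+\lambda s_1\mapsto A'_{X_0}(s_0)$. Surjectivity of $A'_X$ everywhere then forces two things at every point: $\pi_{X_0}(R)\neq0$, which gives (2), and $A'_{X_0}\neq0$, i.e.\ $X_0\to G^{(1)}(X_0)$ is unramified.

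For the second arrow $G^{(1)}(X_0)\to G^{(2)}(X_0)$ of $X_0$, I would use a local frame computation. Take the generator $f_0+Q\lambda\pi_{X_1^\perp}\partial_z^2f_0$ of $R$ and compute the matrix of $A'_{G^{(1)}(X)}$ in terms of $f_0$ and $Q$. The goal is to show that the composite $X\to G^{(2)}(X)$ (the second-order map $\pi_{X_1^\perp}\partial_z^2$ on $R$) factors through $A'_{X_0}$ followed by $A'_{G^{(1)}(X_0)}$. If this holds, vanishing of $A'_{G^{(1)}(X_0)}$ at a point would kill that composite, contradicting total unramifiedness of $X$. Hence $X_0$ is totally unramified. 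Its Veronese-type structure, via the Plücker formulas (the ramification index sum is $0$ for a curve in $\CP^2$), then gives degree $2$.

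\textbf{Step 2: part (3).} With $f_0$ quadratic, $\partial_z^2 f_0$ is a constant vector $c$. Then $X=\Span\{\lambda f_0,\lambda\partial_zf_0,f_0+Q\lambda c\}$ on $\C$, and $\lambda(X_1\cap X_0^\perp)$ meets $R$ only through the $\lambda c$ direction. Suppose $Q$ had a pole at $z_0\in\C$. Then near $z_0$ the line $R$ is generated by $Q^{-1}f_0+\lambda c$, which tends to $\lambda c$. At $z_0$ this makes $\pi_{X_0}(R)=0$, contradicting (2). So $Q$ is entire and meromorphic on $S^2$, hence polynomial.

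For the degree bound, we study $R$ at $\infty$ using the coordinate $w=1/z$. There the rescaled lift $w^2f_0$ is a nonvanishing lift of $X_0$. The $X_1^\perp$-generator $\pi_{X_1^\perp}c$ must be expressed through the corresponding nonvanishing lift of $X_1^\perp$, whose behaviour is like $z^{-2}$ times a regular vector (via the Veronese normalisation). Condition (2) at $w=0$ requires the coefficient $Q\,\|\text{ratio}\|$ to be bounded. Since the ratio behaves like $z^{-2}$, this yields $\deg Q\le 2$.

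\textbf{Main obstacle.} The delicate step is the unramifiedness of $X_0\to G^{(1)}(X_0)\to G^{(2)}(X_0)$ at the second stage, and the exact power of $z$ at $\infty$. Both require careful bookkeeping of the frame normalisations. I would do these computations explicitly, after a unitary change to the standard Veronese lift $f_0=(1,\sqrt2 z,z^2)$, where all bundles have explicit generators.
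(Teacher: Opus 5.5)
Your treatment of (2), of the first arrow $X_0\to G^{(1)}(X_0)$, and of the polynomiality of $Q$ is the paper's. The genuine gap is the degree bound in (3), where your bookkeeping at $\infty$ loses a factor $w^2$.

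To test $\pi_{X_0}(R)=X_0$ at $w=0$, you must multiply the generator $f_0+Q\lambda\pi_{X_1^\perp}\partial_z^2f_0$ by $w^2$, so that its $X_0$-part becomes the nonvanishing lift $\widetilde f_0$. This multiplies the $\lambda X_1^\perp$-part by $w^2$ as well. Write $f_0=v_0+v_1z+v_2z^2$. Then $\partial_z^2f_0=2v_2$, $\partial_w^2\widetilde f_0=2v_0$, and $2v_2-2w^2v_0\in X_1$. Hence $\pi_{X_1^\perp}\partial_z^2f_0=w^2\,\pi_{X_1^\perp}\partial_w^2\widetilde f_0$; this is your ratio $z^{-2}$. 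So near $\infty$ the line $R$ is spanned by
\[
\widetilde f_0+w^4Q(1/w)\,\lambda\,\pi_{X_1^\perp}\partial_w^2\widetilde f_0 ,
\]
where $\pi_{X_1^\perp}\partial_w^2\widetilde f_0$ is nonzero at $w=0$. Therefore $\pi_{X_0}(R)=X_0$ at $\infty$ is equivalent to $\deg Q\le 4$, not $\deg Q\le 2$. Your condition that $Q$ times the ratio be bounded forgets that $f_0$ itself has a double pole at $\infty$.

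No correct bookkeeping can give $\deg Q\le 2$, because that bound is false. Take $f_0=(1,\sqrt2 z,z^2)$ and $Q=z^4$. By Lemma \ref{unitonnumber2}(1) this gives a normalized extended solution of uniton number $2$.

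Diagram \eqref{diag:ref} shows that $X$ is totally unramified if and only if $X_0$ is totally unramified and $\pi_{X_0}(R)=X_0$ everywhere. Indeed, surjectivity of $A'_X$ onto $G^{(1)}(X)$ means exactly that the arrows $R\to X_1\cap X_0^\perp$ and $\lambda(X_1\cap X_0^\perp)\to(X_0\oplus\lambda X_1^\perp)\cap R^\perp$ are both nonzero. The latter is nonzero if and only if $A'_{G^{(1)}(X)}$ does not vanish and $R\neq\lambda X_1^\perp$. Both conditions hold in the example: $V_0^2$ is totally unramified, $Q$ has no finite poles, and $w^4Q(1/w)=1$ in the display.

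The paper's own proof of (3) makes the same slip. It infers holomorphy of $w^2Q(1/w)$ from the generator $\widetilde f_0+w^2Q(1/w)\lambda\partial_z^2f_0$ modulo $\lambda X_1$. But $\partial_z^2f_0=2v_2\in X_1(\infty)=\Span\{v_1,v_2\}$, so $\lambda\partial_z^2f_0$ vanishes to second order modulo $\lambda X_1$ at $w=0$. The correct conclusion is $\deg Q\le 4$. Only the polynomiality of $Q$ is used in Theorem \ref{thm:unramified2}, so nothing downstream is affected.

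Two smaller points:
\begin{enumerate}
\item In Step 1 there is no second source into $X_1^\perp$, since the $\C^3$-component of $\partial_z(s_0+\lambda s_1)$ is $\partial_z s_0\in X_1$. Hence $A'_{G^{(1)}(X)}=A'_{G^{(1)}(X_0)}\circ\pi_{X_1\cap X_0^\perp}$, and unramifiedness of $G^{(1)}(X_0)\to G^{(2)}(X_0)$ follows at once. Your factorization of $\pi_{X_1^\perp}\partial_z^2$ on $R$ is also valid and equally immediate; it needs no frame computation.
\item You cannot unitarily normalize $f_0$ to the Veronese lift in this lemma, since there is no constant-curvature hypothesis. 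None of the computations above needs that normalization.
\end{enumerate}
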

\begin{proof}
	In diagram \eqref{diag:ref}, the horizontal arrow  from $R$ to $X_1\cap X_0^\perp$  corresponds to the line bundle homomorphism $A'_{R,X_1\cap X_0^\perp}\in\Hom\left(R, X_1\cap X_0^\perp \right)$, which satisfies \eqref{arx}.
	Since $G^{(1)}(X_0)=X_1\cap X_0^\perp$ and $G^{(2)}(X_0)=X_1^\perp$, the horizontal arrow from $X_1\cap X_0^\perp$ to $X_1^\perp$  is given by  $A'_{G^{(1)}(X_0)}\in \Hom (G^{(1)}(X_0), G^{(2)}(X_0) )$. 
	If $X$ is totally unramified, these horizontal arrows are unramified, that is, the bundle homomorphisms
	$A'_{R,X_1\cap X_0^\perp}$ and $A'_{G^{(1)}(X_0)}$ are nowhere vanishing on $S^2$. Hence, in view of \eqref{arx}, $A'_{X_0}$ and $A'_{G^{(1)}(X_0)}$ are nowhere vanishing on $S^2$, that is, $X_0$ is totally unramified, which implies that $X_0$ has degree $2$ (see  \cite[Section 3]{Bolton}). 
	
	If the local section $s=s_0+\lambda s_1$ of $R$ in \eqref{arx} is  nowhere vanishing, then $s_0$ is  also nowhere vanishing. Indeed, 
if $s_0$ vanishes at some point, then  $A'_{R,X_1\cap X_0^\perp}(s)$ would vanish at that same point, contradicting the assumption that $X$ is totally unramified. Hence,   $\pi_{X_0}(R)=X_0$ everywhere on $S^2$.

	 Let $f_0$ be a  quadratic polynomial lift of $X_0$ on $\C\cong S^2\setminus\{\infty\}$.   Since $X_0$  is linearly full and totally unramified, we have $X_0=\Span \{f_0\}$, $X_1=\Span \{f_0,\partial_z f_0\}$ and $\C^3=\Span \{f_0,\partial_z f_0, \partial^2_z f_0 \}$ everywhere on $\C$.  Here, $\partial_z^2f_0$ is a nonzero constant. Since   $\pi_{X_0}(R)=X_0$ on $\C\cong S^2\setminus \{\infty\}$, the meromorphic function $Q$ in \eqref{Q} has no finite poles, hence it is polynomial. 
	To determine its degree, consider $w=1/z$. Since $f_0$ is quadratic,
	  $
	 \widetilde f_0(w)=w^2f_0(1/w)
	 $
	 is a holomorphic lift of $X_0$ near $w=0$. Thus, by \eqref{Q}, the corresponding local generator of $R$, modulo $\lambda X_1$, is
	  $\widetilde f_0(w)+w^2Q(1/w)\lambda\,\partial_z^2f_0. $
	  Since $\pi_{X_0}(R)=X_0$ also at $\infty$, $w^2Q(1/w)$ is holomorphic at $w=0$. Hence
	  $ \deg Q\leq2. $
	  \end{proof}

\begin{theorem}\label{thm:unramified2}
Let $\psi:S^2 \to \U(3)$ be a  minimal immersion with constant curvature, uniton number $2$, and normalized extended solution $W= X\oplus \lambda^2\mathcal{H}_+$. Let $X_0\subset X_1$ and $R$ be  defined as in  Lemma \ref{unitonnumber2}. If $X_0$ is totally unramified, then $\psi$ is $S^1$-invariant. Consequently, $\psi$ is unitarily congruent to $\pi_{V_1^2}-\pi_{V_1^2}^\perp$. 
\end{theorem}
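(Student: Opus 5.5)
The plan is to reduce everything to an explicit polynomial identity coming from the Plücker lift of $X$, and then show that this identity forces $Q\equiv 0$ in Remark \ref{remark}. Once $Q=0$, the extended solution is $S^1$-invariant, and Theorem \ref{thm:s1invariant} together with Calabi rigidity identifies $\psi$.

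\emph{Setting up the lift.} Since $X_0$ is totally unramified, it has degree $2$, exactly as in the first part of the proof of Lemma \ref{lem:tounram}; that argument uses only $A'_{X_0}$ and $A'_{G^{(1)}(X_0)}$. So on $\C\cong S^2\setminus\{\infty\}$ we take a quadratic polynomial lift $f_0$ of $X_0$. Then $c:=\partial_z^2 f_0$ is a nonzero constant vector, and $f_0,\partial_zf_0,c$ span $\C^3$ everywhere on $\C$.

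By Remark \ref{remark}, $X$ is spanned by $\lambda f_0,\lambda\partial_zf_0, f_0+Q\lambda c$ with $Q=p/q$ meromorphic, where $p,q$ are coprime polynomials. Clearing denominators gives a polynomial lift of the Plücker image of $X$:
\[
\widehat\sigma = \lambda f_0\wedge\lambda\partial_zf_0\wedge (q f_0+p\lambda c).
\]
The $\lambda^0$ and $\lambda^1$ slots of $\C^6$ are orthogonal. Writing $\beta_0=\|f_0\|^2$, $\beta_1=\|f_0\wedge\partial_zf_0\|^2$ and $d=|\det(f_0,\partial_zf_0,c)|^2$ (a positive constant), we get
\[
\beta:=\|\widehat\sigma\|^2=|q|^2\beta_0\beta_1+|p|^2 d .
\]
Any common zero of the components of $\widehat\sigma$ would be a common zero of $q\,(f_0\wedge \partial_zf_0)$ and $p$. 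Since $f_0\wedge\partial_zf_0$ is nowhere zero and $p,q$ are coprime, $\widehat\sigma$ is nowhere vanishing on $\C$.

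\emph{Polynomial identity.} By Lemma \ref{metricslemma}, $X$ has constant curvature. After a Möbius change of coordinate (which keeps $f_0$ quadratic, up to the usual weight factor), we may assume the round metric is written in the same coordinate $z$. The argument in the proof of Theorem \ref{thm:s1invariant} (harmonic logarithm, then polynomiality forces $e^f$ to be constant) then gives
\begin{equation*}
|q|^2\beta_0\beta_1+|p|^2 d = C(1+z\bar z)^{\alpha}
\end{equation*}
for some constants $C>0$ and $\alpha\in\mathbb N$.

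\emph{Main obstacle: deducing $Q=0$.} I would first polarize, replacing $\bar z$ by an independent variable $w$. On the curve $w=-1/z$ the right-hand side vanishes identically, so
\[
q(z)\bar q(-1/z)\,\beta_0\beta_1(z,-1/z)=-d\,p(z)\bar p(-1/z)
\]
as rational functions. I would then compare orders of zeros and poles at $z=0$, at $\infty$, and at the zeros of $q$, using $\deg\beta_0=\deg\beta_1=(2,2)$ and the fact that $f_0\wedge\partial_zf_0$ has no zeros. The aim is to show first that $q$ is constant, so that $Q$ is a polynomial, and then that $\deg Q\le 2$; this is the analogue of Lemma \ref{lem:tounram}(3), which should be obtainable at $\infty$ by the same $w=1/z$ argument.

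With $Q$ a polynomial of degree at most $2$, I would compare the bidegree-$(j,j)$ parts of the identity. The top terms force $\alpha=4$. I would then check that the Hermitian form $\beta_0\beta_1$ restricted to mixed monomials $z^a\bar z^b$ with $a\neq b$ cannot be cancelled by $d|Q|^2$ unless $Q=0$. Concretely, I would use unitary and $PSL_2$ freedom to normalize $f_0$ so that $\beta_0=(1+|z|^2)^2$ modulo lower-order perturbations, and then match coefficients. This coefficient bookkeeping is where I expect the real difficulty; if it becomes messy, I would fall back on an intrinsic argument comparing the curvature of the line bundle $R$ with that of $X_0$.

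\emph{Conclusion.} With $Q=0$, Remark \ref{remark} gives $X=X_0\oplus\lambda X_1$, so $\psi$ is $S^1$-invariant. Theorem \ref{thm:s1invariant} then makes $X_0$ and $X_1$ linearly full constant-curvature holomorphic immersions. Hence $X_0\cong V_0^2$ by Calabi rigidity, and $X_1=X_0\oplus G^{(1)}(X_0)=V_0^2\oplus V_1^2$. By \eqref{PSIinvariant},
\[
\psi=\Psi_{-1}=\pi_{V_0^2}-\pi_{V_1^2}+\pi_{V_2^2}=-(\pi_{V_1^2}-\pi_{V_1^2}^\perp),
\]
which is congruent to $\pi_{V_1^2}-\pi_{V_1^2}^\perp$ up to left multiplication by the constant $-I_3$.
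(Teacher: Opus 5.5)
Your setup is sound and agrees with the paper. This covers the Plücker lift, the orthogonality giving $\|\widehat\sigma\|^2=|q|^2\beta_0\beta_1+d|p|^2$, the reduction to $C(1+z\bar z)^{\alpha}$, and the final identification with $\pi_{V_1^2}-\pi_{V_1^2}^\perp$ once $Q=0$. But the proposal stops where the content of the theorem lies: you never show that this identity forces $Q=0$.

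The claim that the mixed monomials of $\beta_0\beta_1$ ``cannot be cancelled'' by $d|Q|^2$ unless $Q=0$ restates what must be proved. For a non-Veronese quadratic $f_0$, $\beta_0\beta_1$ does have mixed terms. One has to rule out that $C(1+z\bar z)^{\alpha}-\beta_0\beta_1$ is a nonzero rank-one form $d|Q|^2$.

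Your proposed normalisation is also not available. Once the $PSL_2$ freedom has been spent making the metric of $X$ round, only rotations of $z$ remain, together with unitary changes of $\C^3$, which do not affect $\beta_0$. So $\beta_0$ cannot be brought near $(1+|z|^2)^2$; that $X_0$ is round is essentially the conclusion.

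The paper does this step by normalising $f_0=(1+a_1z+a_2z^2,\,b_1z+b_2z^2,\,c_2z^2)$ with $a_1\in\mathbb R$ and $b_1,c_2>0$. It then imposes the separability condition $P\,\partial_z\partial_{\bar z}P-\partial_zP\,\partial_{\bar z}P=0$, which $P=|\widehat Q|^2$ must satisfy. A long case analysis on the coefficients $R_{kl}$ (cases $a_1a_2b_2\neq0$ and $a_1a_2b_2=0$) ends with $f_0=(1,\sqrt2 z,z^2)$ and then $P=0$. Your fallback of ``comparing the curvature of $R$ with that of $X_0$'' is not specified enough to replace this.

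Your intermediate reductions are also unsupported. The bound $\deg Q\le 2$ via the $w=1/z$ argument of Lemma \ref{lem:tounram}(3) needs $\pi_{X_0}(R)=X_0$ at $\infty$. In that lemma this comes from total unramifiedness of $X$. Here only $X_0$ is assumed totally unramified, so $\pi_{X_0}(R)$ may vanish at a point; if that point is $\infty$, then $\deg Q>2$. Hence your derivation of $\alpha=4$ from the top-degree terms fails as well.

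Likewise, order counting does not obviously make $q$ constant in an arbitrary round coordinate. Putting $z=z_0$ with $q(z_0)=0$ in the polarised identity gives $\bar p(w)\propto(1+z_0w)^{\alpha}$. This shows only that $q$ has at most one distinct root.

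The paper reaches that conclusion geometrically. At a zero of $\pi_{X_0}(R)$ one has $X=\lambda\C^3$, and the Plücker image of $X$ is a Veronese curve by Calabi, hence injective. The paper then places that point at $\infty$, so $Q$ is a polynomial but with no a priori bound $\deg Q\le 2$. It obtains $m=4$ from the separability relation $P(z,\bar z)P(0,0)=P(z,0)P(0,\bar z)$, treating the case $P(0,0)=0$ separately.
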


\begin{proof}
		By Lemma \ref{unitonnumber2},
	$X=R\oplus \lambda X_1$,
	$R\subset X_0\oplus \lambda X_1^\perp,$
	and $\pi_{X_0}(R)=X_0$ almost everywhere.
	Suppose that, at some point $p\in S^2$,
	$\pi_{X_0}\left(R(p)\right)=0.$
	Since $R(p)$ is a complex line contained in
	$X_0(p)\oplus \lambda X_1^\perp(p),$
	it follows that
	$R(p)=\lambda X_1^\perp(p).$
	Hence
	$X(p)
	=
	R(p)\oplus \lambda X_1(p)
	=
	\lambda X_1^\perp(p)\oplus \lambda X_1(p)
	=
	\lambda\C^3.$
	Thus every point at which $\pi_{X_0}(R)$ vanishes is mapped by $X$ to the
	same point $\lambda\C^3$ of the Grassmannian.

	Let $\sigma:S^2\to\mathbb{CP}^{19}$ be the composition of $X:S^2\to G_3(\mathbb C^3\oplus\lambda\mathbb C^3)$ with the Plücker embedding. By Lemma 3.1, $X$ is a holomorphic immersion with constant curvature, and the Plücker embedding preserves the induced metric. Hence $\sigma$ is also a holomorphic immersion with constant curvature. By Calabi's rigidity theorem,  $\sigma$ is unitarily congruent to a Veronese curve and is therefore injective. Since every zero of $\pi_{X_0}(R)$ is mapped by $X$ to $\lambda\mathbb C^3$, there can be at most one such point. Thus $\pi_{X_0}(R)=X_0$ everywhere on $S^2$, except possibly at one point. If such a point exists, choose it to be $\infty$; otherwise choose any point as $\infty$. Choose the complex coordinate $z$ on $S^2\setminus\{\infty\}$ so that the constant-curvature metric induced by $X$ is a positive multiple of $(1+z\bar z)^{-2}dz\,d\bar z$.

	 Let $f_0$ be a  quadratic  polynomial  lift of $X_0$ on $\C\cong S^2\setminus \{\infty\}$; such a lift exists since \(X_0\) is totally unramified and hence has degree $2$. Then \eqref{Q} holds for some meromorphic function $Q$. Arguing as in the last paragraph of the proof of Lemma \ref{lem:tounram}, we see that $Q$ is a polynomial, since   $\pi_{X_0}(R)=X_0$  everywhere on $\C$.  We shall prove that, under the assumptions of the theorem, $Q=0$ (that is, $\psi$ is $S^1$-invariant); then, by Theorem \ref{thm:s1invariant} and Calabi's rigidity theorem, $X_0=\Span\{f_0\}$ is unitarily congruent to the Veronese map $V_0^2:S^2\to\CP^2$.

The map
$\sigma$ admits a   local holomorphic  polynomial lift $\widehat \sigma:\C\to \bigwedge^3(\C^3 \oplus \lambda \C^3)$ of the  form
\begin{equation}
	\widehat\sigma =  (\lambda f_0)\wedge (\lambda \partial_z f_0) \wedge f_0    +\widehat Qc_0^{-1}\left((\lambda f_0)\wedge (\lambda \partial_z f_0)\wedge  (\lambda \partial^2_z f_0)\right),
\end{equation}
where $c_0=\|  f_0\wedge \partial_z f_0 \wedge \partial^2_zf_0 \|$ is a positive constant and $\widehat Q$ is  the polynomial  defined by $\widehat Q={c_0} Q$.  Hence
	$$\|\widehat\sigma\|^2=\| f_0\wedge \partial_z f_0\|^2\|f_0\|^2+|\widehat Q|^2.$$
	Since $\psi$ has constant curvature,  Lemma \ref{metricslemma} implies that the holomorphic map $X$ is an immersion of constant curvature, and it follows that $\|\widehat\sigma\|^2=d(1+z\bar z)^m$, where $d$ is a positive real number and $m$ is a  positive integer. 
Set
 \begin{equation}\label{Pz0}
 P(z,\bar z)=\sum P_{kl}z^k\bar z^l:= 	|\widehat Q|^2 =d(1+z\bar z)^m -  \| f_0\wedge \partial_z f_0\|^2\|f_0\|^2.
 \end{equation}

Since $X_0$ is linearly full, after a unitary congruence and a constant rescaling of the polynomial lift, we may assume that $f_0$ takes the form
\begin{equation}\label{f_0}
	f_0(z)=(1+a_1z +a_2z^2, b_1 z+b_2 z^2,c_2z^2),
\end{equation}
 up to unitary congruence, with  $b_1 c_2\neq 0$. By applying a unitary change of variable of the form $z\mapsto e^{i\alpha}z$, the coefficient  $a_1$ can be made real. Finally,   
an appropriate 
diagonal unitary transformation $\mathrm{diag}(1,e^{i\beta} ,e^{i\gamma})\in \U(3)$ makes $b_1$ and $c_2$ positive real numbers. Thus, without loss of generality, we  assume the following:
\begin{equation}\label{ansatz}
	a_1\in\mathbb{R};\,\,  a_2,b_2\in\C; \,\,   b_1,c_2,d>0.
\end{equation}

We claim that $m=4$.  Considering  the ansatzes \eqref{f_0} and \eqref{ansatz}, $\| f_0\wedge \partial_z f_0\|^2\|f_0\|^2$ is a polynomial $\sum_{k,l=0}^4S_{kl}z^k\bar z^l$,  with leading coefficient $S_{44}>0.$   If $m \leq  3$ in \eqref{Pz0}, then the leading coefficient  of $P(z,\bar z)$ would be $P_{44}$; moreover, we would have $P_{44}<0$,  which is impossible since  $P(z,\bar z)=|\widehat Q(z)|^2$. Hence $m\geq 4$.  Since $P(z,\bar z)$ is separable, we  have 
\begin{equation}\label{Pz}
	P(z,\bar z)P(0,0)=P(z,0)P(0, \bar z).
	\end{equation}
Observe that  $P(z,0)$ and $P(0,\bar z)$ are polynomials of degree at most $4$ in $z$ and $\bar z$, respectively. Hence, if $P(0,0)\neq 0$,  \eqref{Pz} implies that  $m\leq 4$, and this implies $m=4$.

If $P(0,0)=0$, then $\widehat Q(0)=0$, since $P(z,\bar z)=|\widehat Q(z)|^2$. Hence $P(z,0)=0$. Setting $\bar z=0$ in \eqref{Pz0}, we have
\begin{align*}
	P(z,0)=d-b_1^2 &- (a_1 b_1^2  + 2 b_1 b_2) z - 3 a_1 b_1 b_2 z^2 \\&- (2 a_2 b_1 b_2 - a_1 a_2 b_1^2  + 
	a_1^2 b_1 b_2 )z^3   - (a_1 a_2 b_1 b_2 - a_2^2 b_1^2 )z^4.
\end{align*}
Since $b_1\neq0$, the identity $P(z,0)=0$ gives
$a_1=a_2=b_2=0$ and $d=b_1^2$.
Substituting these values into \eqref{Pz0} and  \eqref{f_0}, we obtain
\begin{align*}P(t):=P(z,\bar z)=d(1+t)^m-\left(d + (d^2 + 4 c_2^2)  t+ 6 d c_2^2  t^2+ c_2^2(d^2 + 4 c_2^2) t^3+d c_2^4 t^4\right),\end{align*} with $t=z\bar z$.  
Since $P(z,\bar z)=|\widehat Q(z)|^2$ is radial, $\widehat Q(z)$ must be a monomial, say
$\widehat Q(z)=qz^r.$ Thus $P(t)=|q|^2t^r$. If $m\geq 6$, then, since the coefficient of $t^m$ in $P(t)$ is $d>0$, we must have $r=m$. Hence the coefficient of $t^5$ in $P$ must vanish. However, this coefficient is
$d\binom{m}{5}>0$, a contradiction. Therefore $m\leq 5$.
It remains to exclude $m=5$. If $m=5$, then $r=5$, and the coefficients of $t^2$ and $t^4$ in $P(t)$ must vanish. It is easy to check that this is impossible. Hence $m\neq 5$, and therefore $m=4$.

 Consider   
\begin{equation}\label{P(z)}
P(z,\bar z)=d(1+z\bar z)^4 -  \| f_0\wedge \partial_z f_0\|^2\|f_0\|^2.
\end{equation}
Next, we show that,  under the ansatzes \eqref{f_0} and \eqref{ansatz}, the separability of $P(z,\bar z)$ implies  $a_1=a_2=b_2=0$, $c_2=1$, $d=2$  and $b_1=\sqrt 2$, so that $X_0=\Span\{f_0\}$ is the Veronese map $V_0^2$. 
For this purpose, we use the following necessary condition for separability:
\begin{equation}\label{polynomialR}
R(z,\bar z):=\sum_{k,l} R_{kl} z^k\bar z^l=P(z,\bar z)\partial_z\partial_{\bar z}P(z,\bar z)-\partial_zP(z,\bar z) \partial_{\bar z}P(z,\bar z)=0.
\end{equation}
The coefficients $R_{kl}$ can be found by long  but straightforward computations. These and other straightforward  computations in this proof were carried out using \emph{Wolfram Mathematica}.   The \emph{Wolfram Mathematica} notebook containing the computations is available at  the GitHub repository \cite{Git}.
We seek all values of  $a_1,a_2,b_1,b_2,c_2,d$ satisfying \eqref{ansatz} for which  the system $R_{kl}=0$ (for all $k,l$)  holds. 
\\

\textbf{Case  $a_1a_2b_2\neq 0$.} For general values of the parameters, we have: 
\begin{align}\label{R50}
	R_{50} = 6 b_1^2 \big(b_1b_2(a_2b_1-a_1b_2)^2 +2a_1a_2^2c_2^2\big);\,\, R_{60}	=  b_1^2 \big(-b_1(a_2b_1-a_1b_2)^3+4a_2^3c_2^2 \big).
\end{align}
 Since, by \eqref{ansatz}, $b_1c_2\neq 0$, from $R_{50}=0,R_{60}=0$ we deduce that either $a_1a_2b_2=0$ (we consider this case later) or  
\begin{equation}\label{b2}b_2=\frac{a_1a_2b_1}{a_1^2-2a_2}
	\end{equation}
and 
\begin{equation}\label{b14}b_1^4=\frac{(2a_2-a_1^2)^3}{2a_2^3}c_2^2. 
\end{equation}
Since $a_1$, $b_1$ and $c_2$ are real, we get  from \eqref{b14} that
\begin{equation}\label{a2a2}
\frac{\overline{a}_2}{2\overline a_2-a_1^2}= \frac{\omega {a}_2}{2 a_2-a_1^2},
\end{equation}
with $\omega$ a cube root of unity. Hence, from \eqref{b2}, we obtain
\begin{equation}\label{b2ob2}
	\overline{b}_2=\omega b_2.
\end{equation}
On the other hand, from \eqref{a2a2}, we get
\begin{equation}\label{a2}
\overline{a}_2=\frac{a_1^2 a_2 \omega}{a_1^2 +2 a_2 ( \omega-1)}.
\end{equation}

Using \eqref{b2} to eliminate $b_2$, long but straightforward  computations yield
\begin{align*}R_{40}=12 a_2 b_1^2\frac{ (3 a_1^2 - 2 a_2) a_2^3 b_1^4 + (a_1^2 - 2 a_2)^3 (a_1^2 + a_2) c_2^2 +2 d (a_1^2 - 2 a_2)^2 a_2^2}{(a_1^2 - 2 a_2)^3}.\end{align*}
Hence, setting $R_{40}=0$ and using \eqref{b14},
\begin{equation}\label{d}
d= \frac{c_2^2(a_1^2-2a_2)(a_1^2-4a_2)}{4 a_2^2}.
\end{equation}

Using  \eqref{b2}, \eqref{b2ob2}, \eqref{a2} and \eqref{d}, the coefficient $R_{61}$ simplifies to
$$R_{61}=\frac{12 a_1 a_2^3 b_1^2 \big((a_1^2 - 2 a_2)^2 ((a_1^2 - 2 a_2)^2 + a_1^2 b_1^2) c_2^2 + 
4 a_2^4 b_1^4 \omega\big)}{(a_1^2 - 2 a_2)^4}.$$
Recall that $b_1c_2\neq 0$ and we are considering the case $a_1a_2b_2\neq 0$. Solving the equation $R_{61}=0$ for $b_1^2$ and taking \eqref{b14} into account, we obtain:
\begin{equation}\label{b12}
	b_1^2=-\frac{(a_1^2 - 2 a_2) (a_1^2 - 2 a_2 ( 1+\omega ))}{a_1^2}.
\end{equation}
We can now use \eqref{b14}, \eqref{d}  and \eqref{b12} in order to express $c_2^2$ and $d$ in terms of $a_1$, $a_2$ and $\omega$:
\begin{equation}\label{c2d}
c_2^2=\frac{2 a_2^3 (a_1^2 - 2 a_2 (1 + \omega))^2}{a_1^4( 2  a_2-a_1^2 )},\quad d=\frac{a_2(  4 a_2-a_1^2)  (a_1^2 - 2 a_2 (1 + \omega))^2}{2 a_1^4}.
\end{equation}

 The coefficient $R_{10}$ then simplifies to
\begin{align*}R_{10}= -\frac{6 (a_1^2 - 4 a_2) a_2 (a_1^2 - 2 a_2 (1 + \omega))^3 (a_1^2 + 
	2 a_2^2 (1 + \omega))}{a_1^5}.
\end{align*}          
 If   $a_1^2 - 2 a_2 (1 + \omega)=0$ or $a_1^2 - 4 a_2=0$, then, by \eqref{b12} and \eqref{c2d}, $b_1=0$ or $d=0$, which contradicts \eqref{ansatz}. 
  Hence, for $R_{10}=0$, it remains to consider $a_1^2 + 2 a_2^2 (1 + \omega)=0$. Under this assumption, the identities \eqref{b12} and \eqref{c2d} simplify to
  \begin{equation}\label{c2dsimp}
  b_1^2=2 (1 + a_2) (1 + a_2 (1+\omega) ),\,\, 	c_2^2=\frac{(1 + a_2)^2}{1 + a_2 (1+\omega)} ,\,\,d=(1 + a_2)^2 (2 + a_2 (1+\omega)).
  \end{equation}
  Moreover, $R_{51}$ can be expressed solely in terms of $a_2$ and $\omega$:
  $$R_{51}=-48 a_2^3 (1 + a_2)^3 (-1 + \omega).$$
 Then, the equation $R_{51}=0$ implies that  $a_2=-1$ or $\omega=1$. If $a_2=-1$, then \eqref{c2dsimp} yields $c_2=0$, which  contradicts \eqref{ansatz}. Hence, we must have $\omega=1$.  Since $a_1$ is real, the condition $a_1^2+2a_2^2(1+\omega)=0$ with $\omega=1$ forces $a_2$ to be  purely imaginary.  On the other hand, by \eqref{c2dsimp},  $b_1^2=2(1+a_2)(1+2a_2)$. Since $b_1$ is real, the right-hand side must also be real. Since $a_2$ is purely imaginary, this forces $a_2=0$, contradicting the assumption \(a_1a_2b_2\neq0\).  Therefore, the case $a_1a_2b_2\neq0$ yields no solutions of the system $R_{kl}=0$. We now turn to the case $a_1a_2b_2=0$.  
  \\

\textbf{Case $a_1a_2b_2=0$.} Under the conditions \eqref{ansatz}, we now assume that $a_1a_2b_2=0$. We first prove that, if any of the coefficients $a1,a_2,b_2$ vanishes, the other two vanish as well: 

\begin{enumerate}
\item[1)]	If $a_1=0$, then, in view of \eqref{R50}, $R_{50}=0$ yields $a_2b_2=0$, and we have:

1a)	If $a_1=a_2=0$, then $R_{30}=8 b_1^3 b_2^3$, hence $R_{30}=0$ yields $b_2=0$.

1b) Assume that $a_1=b_2=0$. By \eqref{R50}, 
the equation	$R_{60}=0$ implies that  $a_2=0$ or $c_2= \frac{b_1^2}{2}$.
	If  $c_2=\frac{b_1^2}{2}$, then
	$R_{40}=6 a_2^2 b_1^2 (b_1^4 - 2 d)$, hence
	 $R_{40}=0$ gives $a_2=0$ or  $d=\frac{b_1^4 }{2}$. If $c_2= \frac{b_1^2}{2}$ and $d=\frac{b_1^4 }{2}$, then
$R_{51}=6 a_2^2 b_1^6 (-2 + b_1^2)$, hence $R_{51}=0$ yields $a_2=0$ or $b_1=\sqrt{2}$. For $b_1= \sqrt{2}$, $c_2= \frac{b_1^2}{2}=1$ and $d=\frac{b_1^4 }{2}=2$, we have $R_{62}=48 a_2^3\overline{a}_2$, hence $R_{62}=0$ forces  $a_2=0$. 

\item[2)] 	If $a_2=0$, then,  in view of \eqref{R50}, $R_{50}=0$ gives $a_1b_2=0$. If $a_1=0$, then we fall in the previous case 1a), and $b_2=0$ as well. So we assume $a_2=b_2=0$.   In this case, $R_{30}=4 a_1^3 b_1^2 c_2$, hence $R_{30}=0$ implies $a_1=0$.
\item[3)] 	If $b_2=0$, then,  in view of \eqref{R50}, $R_{50}=0$ gives $a_1a_2=0$, and we fall in one of the previous cases. 
	\end{enumerate}
	Hence the system $R_{kl}=0$ implies that $a_1=a_2=b_2=0$.  With these values, $$R_{11}=24 (b_1^2 - d) (b_1^2 c_2^2 - d),$$ 
	hence $R_{11}=0$ gives $d=b_1^2$ or $d=b_1^2c_2^2$, and we have:
	
	 If $d=b_1^2$ (together with $a_1=a_2=b_2=0$), then  $$R_{55}=24 b_1^4 (-1 + c_2^2)^2 (1 + c_2^2).$$
	Since $c_2$ is real, $R_{55}=0$ gives $c_2=1$. We now have  
	$R_{33}=4 (-2 + b_1^2)^4$, and consequently $R_{33}=0$ forces $b_1=\sqrt{2}$.

	 If $d=b_1^2c_2^2$  (together with $a_1=a_2=b_2=0$), then 
	$$R_{66}=b_1^2 c_2^4 ( c_2^2-1) (-4 b_1^2 + b_1^4 + 4 c_2^2),\,\, R_{00} =-b_1^2 ( c_2^2-1) (b_1^4 - 4 (-1 + b_1^2) c_2^2).$$
If $c_2=1$, then $d=b_1^2$, and the preceding case gives
$b_1=\sqrt2$. If $c_2\neq1$, cancelling the common factor
$c_2^2-1$ in $R_{66}=R_{00}=0$ gives $b_1^2=2$ and
$c_2^2=1$, a contradiction.
	
	We conclude that, under the assumptions of the theorem, $a_1=a_2=b_2=0$, $c_2= 1$, $d=2$, and $b_1=\sqrt 2$ . This implies that $f_0$ in \eqref{f_0} takes the form   
	 $f_0(z)=(1, \sqrt{2} z, z^2)$, that is, $X_0=\Span\{f_0\}:S^2\to \CP^2$ is the Veronese map $V_0^2$. Moreover, in this case, from \eqref{P(z)} one obtains   
	$P(z,\bar z)=0$, hence $\widehat Q=0$, which means, in view of Remark \ref{remark}, that $\psi$ is $S^1$-invariant, with $X_0=V_0^2$ and $X_1=V_0^2\oplus V_1^2$. Hence, taking $\lambda=-1$ and $r=2$ in \eqref{PSIinvariant},  we conclude that
	$\psi=\psi_{V_1^2}-\psi_{V_1^2}^\perp$ (up to unitary congruence).  
\end{proof}
Together with Lemma \ref{lem:tounram}, Theorem \ref{thm:unramified2} gives:

\begin{corollary}\label{thm:unramified3}
	If $\psi:S^2 \to \U(3)$ is a totally unramified minimal immersion with constant curvature and uniton number $2$, then $\psi$ is unitarily congruent to $\pi_{V_1^2}-\pi_{V_1^2}^\perp$. 
\end{corollary}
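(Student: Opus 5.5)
The plan is to combine Lemma \ref{lem:tounram} with Theorem \ref{thm:unramified2}. First I fix the data. Let $\psi:S^2\to\U(3)$ be totally unramified, with constant curvature and uniton number $2$, and let $W=X\oplus\lambda^2\mathcal{H}_+$ be a normalized extended solution. By Lemma \ref{unitonnumber2}(2), $W$ has the form $X=R\oplus\lambda X_1$, with $X_0\subset X_1$ linearly full holomorphic subbundles of ranks $1$ and $2$, and $R$ a holomorphic line subbundle of $X_0\oplus\lambda X_1^\perp$.

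By the definition of a totally unramified harmonic map, the holomorphic map $X:S^2\to G_3(\C^3\oplus\lambda\C^3)$ is totally unramified in the sense of \eqref{tudef}. Part (1) of Lemma \ref{lem:tounram} then gives that $X_0$ is totally unramified.

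This is exactly the hypothesis of Theorem \ref{thm:unramified2}. That theorem applies to any minimal immersion with constant curvature, uniton number $2$, a normalized extended solution, and totally unramified $X_0$. It therefore yields that $\psi$ is $S^1$-invariant and unitarily congruent to $\pi_{V_1^2}-\pi_{V_1^2}^\perp$.

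The only point I need to check is that the notion of ``totally unramified'' for $\psi$ refers to the same normalized extended solution used in Theorem \ref{thm:unramified2}; both are defined through \eqref{Vdef}. Normalized extended solutions are unique up to a constant loop in $\Omega\U(n)$ (cf. \cite{segal}), and such a loop acts isometrically on the relevant finite-dimensional Grassmannian, so unramifiedness does not depend on this choice. The main obstacle was carried by Theorem \ref{thm:unramified2}; the corollary itself is a direct chaining of results.
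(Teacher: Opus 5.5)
Your proof is correct and is exactly the paper's argument: Lemma \ref{lem:tounram}(1) shows that $X_0$ is totally unramified, and Theorem \ref{thm:unramified2} then applies to the same normalized extended solution $W$.

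Your closing independence check is unnecessary, because the definition of ``totally unramified'' and Theorem \ref{thm:unramified2} can both be applied to that one $W$. Its justification is also slightly off: a general constant loop does not preserve $\bigoplus_{j=0}^{r-1}\lambda^j\C^n$. The actual reason there is no ambiguity is that normalization forces the relating loop $\gamma$ to satisfy $\gamma\mathcal{H}_+=\mathcal{H}_+$, hence to be trivial.
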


\subsection*{AI disclosure}The authors used ChatGPT (OpenAI) to improve the English language, identify typographical errors, and assist in checking mathematical arguments in a preliminary version of this paper. All mathematical ideas and proof strategies are due to the authors, who have independently verified the final content and take full responsibility for it.

\bibliographystyle{amsplain}
\bibliography{references}

\end{document}